\title{Constancy results for special families of projections}
\author{Katrin F\"assler and Tuomas Orponen}\email{katrin.fassler@helsinki.fi \\ tuomas.orponen@helsinki.fi}
\thanks{K. F\"{a}ssler was supported by the Swiss National Science Foundation. T. Orponen acknowledges the financial support of the Finnish National Graduate School in Mathematics and its Applications.}
\subjclass[2010]{28A78 (Primary); 28A80 (Secondary).}
\newcommand{\R}{\mathbb{R}}
\newcommand{\N}{\mathbb{N}}
\newcommand{\Z}{\mathbb{Z}}
\newcommand{\Hd}{\dim_{\mathrm{H}}}
\newcommand{\calD}{\mathcal{D}}
\newcommand{\calE}{\mathcal{E}}
\newcommand{\calT}{\mathcal{T}}
\newcommand{\calH}{\mathcal{H}}
\newcommand{\spt}{\operatorname{spt}}
\newcommand{\card}{\operatorname{card}}
\newcommand{\p}{\mathbf{p}}
\newcommand{\B}{\operatorname{B}}
\newcommand{\MB}{\operatorname{MB}}
\newcommand{\Dim}{\operatorname{Dim}}
\newcommand{\m}{\mathfrak{m}}
\newcommand{\dist}{\operatorname{dist}}
\numberwithin{equation}{section}
\theoremstyle{plain}
\newtheorem{thm}[equation]{Theorem}
\newtheorem{lemma}[equation]{Lemma}
\newtheorem{cor}[equation]{Corollary}
\newtheorem{proposition}[equation]{Proposition}
\theoremstyle{definition}
\newtheorem{definition}[equation]{Definition}
\theoremstyle{remark}
\newtheorem{remark}[equation]{Remark}
\begin{document}

\begin{abstract} Let $\{\mathbb{V}=V\times \mathbb{R}^l : V \in G(n-l,m-l)\}$ be the family of $m$-dimensional subspaces of $\R^{n}$ containing $\{0\}\times \mathbb{R}^l$, and let $\pi_{\mathbb{V}}\colon \R^n \to \mathbb{V}$ be the orthogonal projection onto $\mathbb{V}$. We prove that the mapping $V \mapsto \Dim \pi_{\mathbb{V}}(B)$ is almost surely constant for any analytic set $B \subset \mathbb{R}^n$, where $\Dim$ denotes either Hausdorff or packing dimension.
\end{abstract}

\maketitle

\section{Introduction}

Unlike Hausdorff dimension, packing dimension is not generally preserved by orthogonal projections. In 1994, M. J\"{a}rvenp\"{a}\"{a}  exhibited in her PhD thesis \cite{Jar} a compact set $K \subset \R^{n}$ of packing dimension $\dim_{\p} K =: s \leq m$, such that the projections of $K$ onto every $m$-dimensional subspace in $\R^{n}$ have packing dimension strictly smaller than $s$. Three years later, K. Falconer and J. Howroyd \cite{FH} discovered a curious phenomenon: the packing dimension of the projections is almost surely constant -- only this constant need not be $s$.

In the present paper, we aim for similar results in a context different from Falconer and Howroyd's. We consider some (particular) subfamilies of the family of all orthogonal projections from $\mathbb{R}^n$ to $m$-dimensional subspaces -- the simplest case covered being the projections onto all `vertical' planes in $\R^{3}$. It is obvious that, in general, these subfamilies of projections preserve neither Hausdorff- nor packing dimension. We address the constancy questions in Theorems \ref{main2_higher} and \ref{main1_higher} by proving that `maximal behavior is typical behavior' for the dimension of projections. Such results do not follow from the classical projection theorems of Mar{\-}strand, Kaufman and Mattila, even if one takes into account the refined versions with exceptional sets, see \cite{Ka}. We should also mention that our techniques are quite different from the ones developed in \cite{FH}.

As far as we know, constancy issues have not been studied previously for `small' families of projections, that is, families with a parameter set smaller than the whole Grassmannian $G(n,m)$. However, such families have received some attention quite recently. In \cite{JJK}, concluding the work started in \cite{JJLL}, E. J\"arvenp\"a\"a, M. J\"arvenp\"a\"a and T. Keleti provide a complete answer to the following question: given a general `small' non-degenerate family of projections in $\R^{n}$, how much can the dimension of a set $B \subset \R^{n}$ (or a measure) drop under these projections? We emphasize that our families of projections are nowhere as general as the ones studied in \cite{JJK}. The reason is simple: it is not clear to us, what is the greatest generality in which constancy results -- such as the ones below -- can be proven. At any rate, they are not true for all families considered in \cite{JJK}: for instance, it is easy to find one-parameter families of projections onto planes in $\R^{3}$, for which there is no hope of constancy of any kind.

It is time to introduce the particular families of projections we will be concerned with. They are projections onto $m$-planes in $\R^{n}$, $2 \leq m < n$, parameterized by the Grassmannian $G(n-l,m-l)$ for some $0<l<m$. Since
\begin{displaymath}
 \Hd G(n-l,m-l)= (m-l)(n-m)<m(n-m)=\Hd G(n,m),
\end{displaymath}
such families are `small' in the sense introduced above. Write $\mathbb{V}=V\times \R^{l}$ for the $m$-dimensional subspace of $\R^{n}$ containing $\{0\}\times \R^l$, where $V$ is an element of $G(n-l,m-l)$. We are interested (only) in the orthogonal projections $\pi_{\mathbb{V}}\colon \R^n\to \mathbb{V}$. The simplest case is obtained with $n = 3$, $m = 2$ and $l = 1$: then the mappings $\pi_{\mathbb{V}}$ are the orthogonal projections onto the `vertical' planes in $\R^{3}$, that is, the planes containing the $z$-axis $\{0\} \times \R$.

We write $B_{\mathbb{V}}=\pi_{\mathbb{V}}(B)$. We will also make use of the projections onto the $(m-l)$-dimensional subspaces $V$; we will denote by $\pi_V$ both the orthogonal projection $\R^{n-l} \to V$ and $\R^{n} \to V\times \{0\}$. We write $B_{V} = \pi_V (B)$, and denote by $\gamma_{n,m}$ the natural $O(n)$ invariant measure on the Grassmanian $G(n,m)$, see \cite[3.9]{Mat2}. Furthermore, $G(n,m)$ will be endowed with the metric $d_{\pi}$
given by
\begin{equation}\label{eq:dist}
d_{\pi}(V,W)=\|\pi_V-\pi_W\|,\quad V,W\in G(n,m),
\end{equation}
where $\|\cdot\|$ denotes the operator norm. Below and above, $\Hd$ refers to Hausdorff dimension, whereas $\dim_{\p}$ refers to packing dimension and $\overline{\dim}_{\B}$ denotes the upper box dimension.

Before stating our main results on constancy, let us observe as Proposition \ref{p:hausdorff} that for sets $B$ with small enough dimension, it is  possible to give an almost sure formula for $\Hd B_{\mathbb{V}}$ in terms of $\Hd B$. Perhaps surprisingly, the proposition is not a corollary of the bounds in \cite{JJK}, as they are not sharp for our particular families of projections. This only testifies that our projections have a very special form -- and the proof of Proposition \ref{p:hausdorff} heavily relies on this fact. Let us clarify the point with an example: according to the proposition below, the Hausdorff dimension of every $1$-dimensional set is almost surely preserved under the $2$-dimensional family of projections onto $2$-dimensional planes in $\mathbb{R}^4$ which contain $\{0\}\times \mathbb{R}$. However, it is not true that the dimension of such sets is preserved under \emph{arbitrary} non-degenerate $2$-dimensional families of projections from $\mathbb{R}^4$ to $2$-dimensional planes. Consider, for instance, the family of projections associated to $2$-planes contained in $\mathbb{R}^3 \times \{0\}$. For these projections, the set $B=\{0\}\times \mathbb{R}$ is projected to a point for all considered directions, and so the dimension can drop from one to zero.

\begin{proposition}\label{p:hausdorff}
Let $B \subset \R^{n}$ be an analytic set with $\Hd B \leq m-l$. Then $\Hd B_{\mathbb{V}}= \Hd B$ for $\gamma_{n-l,m-l}$ almost every $V \in G(n-l,m-l)$.
\end{proposition}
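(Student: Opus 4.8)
The plan is to prove the two inequalities $\Hd B_{\mathbb{V}}\le\Hd B$ and $\Hd B_{\mathbb{V}}\ge\Hd B$ separately, the first for every $V$ and the second only for $\gamma_{n-l,m-l}$-almost every $V$. The upper bound is immediate once we unwind the geometry. Since $\mathbb{V}=V\times\R^l$ has orthogonal complement $V^{\perp}\times\{0\}$ inside $\R^n=\R^{n-l}\times\R^l$, the projection acts by $\pi_{\mathbb{V}}(x,u)=(\pi_V x,u)$ for $(x,u)\in\R^{n-l}\times\R^l$; in particular $\pi_{\mathbb{V}}$ is $1$-Lipschitz, so $\Hd B_{\mathbb{V}}\le\Hd B$ for all $V$.

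For the lower bound I would run the potential-theoretic method. Fix $s<\Hd B\le m-l$. Since $B$ is analytic, Frostman's lemma (applicable to analytic sets) provides a nonzero Radon measure $\mu$ carried by a compact subset $K\subset B$ with finite $s$-energy $I_s(\mu)=\iint|p-q|^{-s}\,d\mu(p)\,d\mu(q)<\infty$. The pushforward $(\pi_{\mathbb{V}})_{\ast}\mu$ is then carried by the compact set $\pi_{\mathbb{V}}(K)\subset B_{\mathbb{V}}$, and finite $s$-energy of this nonzero measure forces $\Hd B_{\mathbb{V}}\ge\Hd\pi_{\mathbb{V}}(K)\ge s$. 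Thus it suffices to show that $I_s\big((\pi_{\mathbb{V}})_{\ast}\mu\big)<\infty$ for $\gamma_{n-l,m-l}$-almost every $V$; letting $s\uparrow\Hd B$ along a countable sequence and intersecting the resulting full-measure sets will then give $\Hd B_{\mathbb{V}}\ge\Hd B$ almost surely.

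To control the energies on average I would integrate over the Grassmannian. Writing points of $\R^n$ as $p=(x,u)$ and $q=(x',u')$ and using Fubini together with $\pi_{\mathbb{V}}(p)-\pi_{\mathbb{V}}(q)=(\pi_V(x-x'),u-u')$, one obtains
\[
\int_{G(n-l,m-l)} I_s\big((\pi_{\mathbb{V}})_{\ast}\mu\big)\,d\gamma_{n-l,m-l}(V)=\iint J(x-x',|u-u'|)\,d\mu(p)\,d\mu(q),
\]
where
\[
J(z,t)=\int_{G(n-l,m-l)}\big(|\pi_V z|^2+t^2\big)^{-s/2}\,d\gamma_{n-l,m-l}(V).
\]
The crux is the pointwise bound $J(z,t)\lesssim(|z|^2+t^2)^{-s/2}$, uniform in $z\in\R^{n-l}$ and $t\ge 0$. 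Granting it, the right-hand side above is at most a constant times $\iint|p-q|^{-s}\,d\mu(p)\,d\mu(q)=I_s(\mu)<\infty$, whence $I_s\big((\pi_{\mathbb{V}})_{\ast}\mu\big)<\infty$ for almost every $V$, as required.

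The main obstacle is precisely this estimate for $J$, and it is here that the special product structure of the family is used. By the $O(n-l)$-invariance of $\gamma_{n-l,m-l}$, the random variable $\theta=|\pi_V z|/|z|$ has a distribution on $[0,1]$ independent of $z$ whose density behaves like $\theta^{m-l-1}$ near the origin; this is the classical reason that the Grassmannian integral $\int|\pi_V z|^{-s}\,d\gamma_{n-l,m-l}(V)\lesssim|z|^{-s}$ converges exactly when $s<m-l$. The contribution to $J(z,t)$ from $\theta$ bounded away from $0$ is harmless, since there $|\pi_V z|\asymp|z|$ and hence $|\pi_V z|^2+t^2\asymp|z|^2+t^2$. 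Setting $r=|z|$, the remaining part reduces to a one-dimensional integral comparable to $\int_0^{1/2}(r^2\theta^2+t^2)^{-s/2}\theta^{m-l-1}\,d\theta$, which I would estimate by a case analysis according to whether $t\le r$ or $t>r$, in the former regime splitting the $\theta$-integral near $\theta_0=t/r$. The two exponent conditions $m-l-s>0$ and $m-l-1-s>-1$ that make the resulting pieces converge to $\lesssim(r^2+t^2)^{-s/2}$ are both equivalent to $s<m-l$, which is exactly where the hypothesis $\Hd B\le m-l$ enters decisively.
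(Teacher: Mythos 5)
Your proposal is correct and follows essentially the same route as the paper's proof: a Frostman measure on $B$ with finite $s$-energy, Fubini over $G(n-l,m-l)$, the product structure $\pi_{\mathbb{V}}(x,u)=(\pi_V x,u)$, and the kernel bound $\int_{G(n-l,m-l)}\bigl(|\pi_V z|^2+t^2\bigr)^{-s/2}\,d\gamma_{n-l,m-l}(V)\lesssim(|z|^2+t^2)^{-s/2}$ for $s<m-l$, which the paper gets by splitting into the cases where the horizontal or the vertical component of $x-y$ dominates and citing \cite[Corollary 3.12]{Mat2}. The only difference is cosmetic: you re-derive that classical Grassmannian estimate from the $\theta^{m-l-1}$ behaviour of the distribution of $|\pi_V z|/|z|$ rather than quoting it.
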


For sets $B$ of dimension bigger than $m-l$, it is no longer possible to give an almost sure formula for $\Hd B_{\mathbb{V}}$ in terms of $\Hd B$. Instead, we have the following constancy results.

\begin{thm}\label{main2_higher} Let $B \subset \R^{n}$ be an analytic set, and write
\begin{displaymath} \m_{\mathrm{H}} := \sup\{\Hd B_{\mathbb{V}} : V \in G(n-l,m-l)\}. \end{displaymath}
Then, the $\Hd B_{\mathbb{V}} = \m_{\mathrm{H}}$ for $\gamma_{n-l,m-l}$ almost every $V\in G(n-l,m-l)$.
\end{thm}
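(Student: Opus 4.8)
\emph{Reduction to a transference statement.} The plan is to derive the theorem from the following claim: if $\Hd B_{\mathbb{W}} > s$ for a single $W \in G(n-l,m-l)$, then $\Hd B_{\mathbb{V}} \ge s$ for $\gamma_{n-l,m-l}$-almost every $V$. Granting this, choose $s_k \uparrow \m_{\mathrm{H}}$ with $s_k < \m_{\mathrm{H}}$; the definition of the supremum provides $W_k$ with $\Hd B_{\mathbb{W}_k} > s_k$, so $\Hd B_{\mathbb{V}} \ge s_k$ off a $\gamma_{n-l,m-l}$-null set $N_k$. Off $\bigcup_k N_k$ one has $\Hd B_{\mathbb{V}} \ge \m_{\mathrm{H}}$, while $\Hd B_{\mathbb{V}} \le \m_{\mathrm{H}}$ holds everywhere by definition, which proves the theorem and also avoids any measurability discussion for $V \mapsto \Hd B_{\mathbb{V}}$. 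Since $B$ is analytic, $\Hd B_{\mathbb{V}} = \sup\{\Hd \pi_{\mathbb{V}}(K) : K \subset B \text{ compact}\}$, so I may assume throughout that the relevant sets and measures are compactly supported.

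\emph{Potential-theoretic set-up.} Write $\Phi_V(x,y) = (\pi_V x, y)$ for $(x,y) \in \R^{n-l}\times\R^l$, so that $B_{\mathbb{V}} = \Phi_V(B)$. To prove the transference claim it suffices to produce a compactly supported measure $\mu$ on $B$ with
\begin{equation*}
\int_{G(n-l,m-l)} I_s(\Phi_{V\#}\mu)\,d\gamma_{n-l,m-l}(V) < \infty, \qquad I_s(\nu) := \iint |z-z'|^{-s}\,d\nu(z)\,d\nu(z'),
\end{equation*}
for then $I_s(\Phi_{V\#}\mu) < \infty$, and hence $\Hd B_{\mathbb{V}} \ge \Hd \spt(\Phi_{V\#}\mu) \ge s$, for $\gamma_{n-l,m-l}$-almost every $V$. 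By Fubini the integral equals $\iint J_s(x-x',|y-y'|)\,d\mu\,d\mu'$ with $J_s(w,r) = \int (|\pi_V w|^2 + r^2)^{-s/2}\,d\gamma_{n-l,m-l}(V)$. For $s \le m-l$ the transference claim is immediate from Proposition \ref{p:hausdorff} applied to a compact subset of $B$ of dimension in $(s, m-l]$, so I may assume $s > m-l$; in that range the basic estimate $\gamma_{n-l,m-l}(\{V : |\pi_V w| \le \delta |w|\}) \lesssim \delta^{m-l}$ gives
\begin{equation*}
J_s(w,r) \lesssim |y-y'|^{\,m-l-s}\,|x-x'|^{-(m-l)}, \qquad w = x - x',\ r = |y-y'|.
\end{equation*}
Everything thus reduces to exhibiting $\mu$ on $B$ of finite \emph{anisotropic} energy $\iint |y-y'|^{m-l-s}|x-x'|^{-(m-l)}\,d\mu\,d\mu'$.

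\emph{The crux, and the main obstacle.} Constructing this $\mu$ is where I expect the real work to lie, and where the product form $\mathbb{V} = V \times \R^l$ has to be used decisively. The tempting shortcut -- take a Frostman measure $\sigma$ on $B_{\mathbb{W}}$, lift it to $\mu$ on $B$ with $\Phi_{W\#}\mu = \sigma$, and estimate via $|x-x'| \ge |\pi_W(x-x')|$ -- is too lossy: replacing the ambient distance $|x-x'|$ by its $W$-projection inflates the kernel on pairs that are close horizontally but separated vertically, i.e.\ exactly the pairs that contribute nothing to the genuine $s$-energy of $\Phi_V(B)$, and the resulting energy diverges. What one must instead prove is that $\Hd B_{\mathbb{W}} > s$ forces the capacity attached to the kernel $|y-y'|^{m-l-s}|x-x'|^{-(m-l)}$ to be positive. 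For a product $B = B'\times B'' \subset \R^{n-l}\times\R^l$ this is encouraging, since the energy factorises as $I_{m-l}(\mu')\,I_{s-(m-l)}(\mu'')$ and its finiteness matches $\Hd \pi_W(B') > m-l$ together with $\Hd B'' > s-(m-l)$; the task is to carry out the corresponding measure construction for a general analytic $B$, controlling the horizontal spread of $\mu$ (capacity with respect to $|x-x'|^{-(m-l)}$ in $\R^{n-l}$) and its vertical spread simultaneously. An alternative would be the fibre-wise route, writing $\Phi_V(B)$ as the union of the slices $\{y\}\times \pi_V(B^y)$ and combining Marstrand's theorem for $\pi_V$ on each slice $B^y \subset \R^{n-l}$ with a slicing lower bound; there the obstacle migrates to controlling, uniformly in $V$ and at the level of Hausdorff dimension rather than merely for $\mathcal{L}^l$-almost every $y$, the exceptional sets of directions arising for the different slices. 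Either way, the genuinely hard point is the uniform, capacitary step just described; the reductions preceding it are routine.
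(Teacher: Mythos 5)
Your reductions are sound and partly parallel the paper: the transference claim (one good plane implies almost all planes are good), the Fubini computation of the averaged kernel, the disposal of the range $s \leq m-l$ via Proposition \ref{p:hausdorff}, and the pull-back of a Frostman measure from a near-extremal projection are all present in the paper's argument as well. But the proposal stops exactly where the theorem lives, and moreover the step you reduce everything to is \emph{false}: there need not exist any measure on $B$ with finite anisotropic energy, even when $\m_{\mathrm{H}} = m$. Concretely, take $B$ to be the closed unit ball of a single plane $\mathbb{V}_1 = V_1 \times \R^l$. Then $\m_{\mathrm{H}} = m$ and the conclusion of the theorem holds trivially (for a.e.\ $V$ the restriction $\pi_{\mathbb{V}}|_{\mathbb{V}_1}$ is nondegenerate), but for $m-l < s < m$ \emph{every} nonzero measure $\mu$ on $B$ has horizontal marginal $\mu'$ supported on $V_1 \cong \R^{m-l}$, and the critical-exponent energy $I_{m-l}(\mu')$ is infinite for every nontrivial compactly supported measure on $\R^{m-l}$ (if $\int |x-y|^{-(m-l)}\,d\mu'(y) < \infty$ then $\mu'(B(x,r)) = o(r^{m-l})$ by dominated convergence, and a measure with vanishing upper $(m-l)$-density $\mu'$-a.e.\ on a set of finite $\calH^{m-l}$-measure is null). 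Since $|y-y'|^{m-l-s}$ is bounded below on the bounded support, your anisotropic energy is $\gtrsim I_{m-l}(\mu') = \infty$. Your own product-case sanity check already contains this flaw: finiteness of $I_{m-l}(\mu')$ requires horizontal dimension \emph{strictly} greater than $m-l$, which the hypothesis $\Hd B_{\mathbb{W}} > s$ does not supply (and the condition you wrote, $\Hd \pi_W(B') > m-l$, can never hold, since $\pi_W(B') \subset W \cong \R^{m-l}$). A second, compounding loss: your kernel bound is not sharp; one has $J_s(w,r) \asymp r^{m-l-s}\max\{r,|w|\}^{-(m-l)}$, and replacing $\max\{|x-x'|,|y-y'|\}^{-(m-l)}$ by $|x-x'|^{-(m-l)}$ inflates the kernel precisely on the vertically separated pairs $|y-y'| \gg |x-x'|$ --- the same lossiness you correctly criticized in the naive pull-back. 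In the ball example the sharp kernel yields a finite average for Lebesgue measure on $B$, while your relaxed kernel diverges; so part of the obstruction is an artifact of your estimate, but the endpoint-energy obstruction above is intrinsic.

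The paper proves exactly your transference claim but abandons the single-integrated-energy framework, for precisely this reason (it remarks after Proposition \ref{p:hausdorff} that the average of $I_s(\pi_{\mathbb{V}\sharp}\mu)$ may be infinite when $s > m-l$). Its substitute is multiscale: pull back, via Lubin's lemma, a Frostman measure with $I_{\sigma} < \infty$ from a near-extremal projection $B_{\mathbb{V}_0}$; discretise $\mu$ at each dyadic scale $\delta_j$ into an $L^2$ density $\mu^{\delta}$; slice horizontally into $\mu^{\delta}_h$ (the analogue of your Fubini in $y$); and then invoke the key device, Lemma \ref{comparison_higher}: for $\delta$-discretised measures, $I_{m-l}(\nu) \lesssim \delta^{t-(m-l)} I_t(\nu)$ for $t < m-l$. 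This trades the impossible critical energy --- exactly the one that blows up in the ball example --- for a subcritical one, at the price of an $\epsilon$-loss $\delta^{t-(m-l)}$ \emph{per scale}; the subcritical slice energies are then dominated through the single plane $V_0$ by $I_t(\mu_h^{\delta}) \leq I_t((\mu_h^{\delta})_{V_0}) \lesssim \|(\mu_h^{\delta})_{V_0}\|_2^2$, summing to $\|(\mu^{\delta})_{\mathbb{V}_0}\|_2^2 \lesssim \delta^{\sigma-m}$. Because of the per-scale loss, the estimates cannot be repackaged as one finite energy; the conclusion is extracted scale-by-scale via Chebyshev and Borel--Cantelli, and the $L^2$ growth bounds are converted back into Hausdorff dimension by Lemma \ref{L1_higher}. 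So the missing ``capacitary step'' is not a technical lemma you could add to your framework: the single-measure, single-energy formulation has to be replaced by this discretised $L^2$ scheme.
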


\begin{thm}\label{main1_higher} Let $B \subset \R^{n}$ be a bounded analytic set. Write
\begin{displaymath} \m_{\B} := \sup \{\overline{\dim}_{\B} B_{\mathbb{V}} : V \in G(n-l,m-l)\}, \; \m_{\p} := \sup\{\dim_{\p} B_{\mathbb{V}} : V \in G(n-l,m-l)\}. \end{displaymath}
Then, the sets
\begin{displaymath} E_{\B} := \{V \in G(n-l,m-l) : \overline{\dim}_{\B} B_{\mathbb{V}} \neq \m_{\B} \},  \end{displaymath}
\begin{displaymath}
E_{\p} := \{V \in G(n-l,m-l) : \dim_{\p} B_{\mathbb{V}} \neq \m_{\p}\}
\end{displaymath}
are meagre and have $\gamma_{n-l,m-l}$ measure zero. The statement concerning packing dimension holds for unbounded sets as well.
\end{thm}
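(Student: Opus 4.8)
The plan is to establish the conclusion for $\overline{\dim}_{\B}$ first and then transfer it to $\dim_{\p}$ through the regularization identity $\dim_{\p} E = \inf\{\sup_j \overline{\dim}_{\B} E_j : E \subseteq \bigcup_j E_j\}$. Granting the box-dimension statement, for any decomposition $B = \bigcup_j B_j$ the box-dimension theorem applied to each bounded piece $B_j$ produces a meagre and $\gamma_{n-l,m-l}$-null set off which $\overline{\dim}_{\B}(B_j)_{\mathbb{V}}$ equals its own maximum $\m^{(j)}_{\B} := \sup_W \overline{\dim}_{\B}(B_j)_{\mathbb{W}}$ for every $j$ simultaneously. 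Since packing dimension is countably stable and $\dim_{\p} \leq \overline{\dim}_{\B}$, one checks first that $c := \inf\{\sup_j \m^{(j)}_{\B}\}$, the infimum over decompositions, satisfies $\dim_{\p} B_{\mathbb{V}} \leq c$ for \emph{every} $V$, hence $\m_{\p} \leq c$, and, using a countable family of near-optimal decompositions, that also $\dim_{\p} B_{\mathbb{V}} \leq c$ off a meagre and null set. For the reverse inequality one uses that every decomposition of $B_{\mathbb{V}}$ pulls back to a decomposition of $B$, so off the exceptional set $\sup_j \overline{\dim}_{\B}(\cdot) = \sup_j \m^{(j)}_{\B} \geq c$; the point is that it suffices to let the decompositions range over a fixed countable family, which is where analyticity (separability) of $B$ enters. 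This yields $\dim_{\p} B_{\mathbb{V}} = c = \m_{\p}$ off a meagre and null set, proving the packing assertion (and the boundedness hypothesis can be removed here by exhausting $B$ by bounded pieces).

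For the box-dimension statement it suffices to prove that $\{V : \overline{\dim}_{\B} B_{\mathbb{V}} \geq s\}$ is a $G_\delta$ set of full $\gamma_{n-l,m-l}$-measure for every $s < \m_{\B}$; intersecting over a sequence $s \uparrow \m_{\B}$ then shows that $\overline{\dim}_{\B} B_{\mathbb{V}} = \m_{\B}$ off a meagre and null set, because full measure together with the full support of $\gamma_{n-l,m-l}$ forces denseness, and a dense $G_\delta$ is comeagre. The $G_\delta$ property I would obtain from a lower semicontinuity of $V \mapsto \overline{\dim}_{\B} B_{\mathbb{V}}$ at the level of separated sets: replacing covering numbers by maximal $\delta$-separated subsets, if $B_{\mathbb{V}_0}$ contains $k$ points whose pairwise distances exceed $\delta$, then so does $B_{\mathbb{W}}$ for every $W$ close enough to $V_0$, since $\|\pi_{\mathbb{W}} - \pi_{\mathbb{V}_0}\|$ is small and only finitely many points are involved. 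Hence $\{V : \text{the } \delta\text{-separation number of } B_{\mathbb{V}} \geq k\}$ is open, and the usual limsup description of box dimension exhibits $\{V : \overline{\dim}_{\B} B_{\mathbb{V}} \geq s\}$ as a countable intersection of countable unions of such open sets.

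Everything therefore reduces to the measure statement: for each $s < \m_{\B}$, $\overline{\dim}_{\B} B_{\mathbb{V}} \geq s$ for $\gamma_{n-l,m-l}$-a.e. $V$. This is the main obstacle, and it is here that the special product form $\pi_{\mathbb{V}}(x,y) = (\pi_V x, y)$ must be exploited. Fixing a near-optimal direction $V_0$ with $\overline{\dim}_{\B} B_{\mathbb{V}_0} > s' > s$, I would extract along a sequence of scales $\delta_k \to 0$ collections of roughly $\delta_k^{-s'}$ points of $B$ whose $\pi_{\mathbb{V}_0}$-images are $\delta_k$-separated, and then show that for most $V$ at least $\delta_k^{-s}$ of these images remain $\delta_k$-separated. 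Two points $(x,y),(x',y')$ collide under $\pi_{\mathbb{V}}$ only if $|y - y'| \leq \delta_k$ and $|\pi_V(x - x')| \leq \delta_k$, and the transversality estimate $\gamma_{n-l,m-l}(\{V : |\pi_V z| \leq \delta\}) \lesssim (\delta/|z|)^{n-m}$ bounds the measure of directions causing a given collision. A collision count, with the surplus exponent $s' - s$ providing room, should bound the measure of directions along which the separation number drops below $\delta_k^{-s}$, and choosing the scales $\delta_k$ sparse enough to make these measures summable, the Borel--Cantelli lemma places a.e. $V$ in infinitely many good scales, giving $\overline{\dim}_{\B} B_{\mathbb{V}} \geq s$ a.e. The delicate points — and the crux of the whole argument — are to handle the limsup (the optimal scales for $V_0$ need not be optimal for other $V$, so one must commit to the scales of $V_0$ and control the loss), and to carry out the collision count uniformly when the difference vectors $x - x'$ can be as short as $\delta_k$, where the transversality estimate is not by itself decisive.
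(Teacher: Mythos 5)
Your treatment of the upper box dimension half is essentially sound and close in spirit to the paper's. The measure statement via a near-optimal direction $V_{0}$, the slab decomposition exploiting $\pi_{\mathbb{V}}(x,y) = (\pi_{V}x,y)$, and a collision count against the transversality estimate is exactly the paper's mechanism (Lemma \ref{OR} and Corollary \ref{ORCor}), recast in first-moment/Borel--Cantelli form. Of your two flagged ``delicate points'', the first (committing to the scales of $V_{0}$) is harmless precisely because $\overline{\dim}_{\B}$ is a limsup, so good behaviour along infinitely many scales suffices; the second (pairs with $|x-x'|$ as small as $\delta$) is resolved by an observation you did not make but which is the paper's key device: if the points are chosen with $\delta$-separated $\pi_{\mathbb{V}_{0}}$-images --- one point per tube of a partition $\calT_{V_{0}}$ transverse to $V_{0}$ --- then within each slab one automatically has $\card[C \cap B(x,r)] \lesssim (r/\delta)^{m-l}$, i.e.\ $C$ is a $(\delta,m-l)$-set, and the dyadic-shell summation then bounds the expected collision number by $\lesssim N\log(1/\delta)$, with the shortest shell contributing only $\lesssim N$. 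Two further remarks: the transversality exponent should be $m-l$ (the dimension of the subspaces in $G(n-l,m-l)$; see Mattila's Lemma 3.11), not $n-m$ --- the two coincide in the model case $n=3$, $m=2$, $l=1$ but not in general; and your category argument (lower semicontinuity of separation numbers makes $\{V : \overline{\dim}_{\B}B_{\mathbb{V}} \geq s\}$ a $G_{\delta}$, which is dense by the measure statement and full support) is a correct and genuinely different route to meagreness than the paper's, which instead proves the quantitative bound \eqref{form12_higher} on the modified lower box dimension of the exceptional set and reads off both meagreness and nullity from it.

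The genuine gap is the packing-dimension reduction. The inequality $\dim_{\p}B_{\mathbb{V}} \leq c$ for every $V$ is fine, but the reverse direction, as you set it up, does not work: $\dim_{\p}B_{\mathbb{V}}$ is an infimum over \emph{all} countable decompositions of $B_{\mathbb{V}}$, these pull back to decompositions of $B$ that depend on $V$, and your exceptional sets are per-decomposition, so you may only discard a countable union of them. You would therefore need a single countable family of decompositions of $B$ whose pullbacks witness the packing infimum simultaneously for all $V$, and ``analyticity (separability) of $B$'' provides nothing of the sort: upper box dimension is insensitive to the approximations separability yields (it does not change under passage to dense subsets, for instance), so there is no countable ``basis'' of decompositions controlling it. If such an argument were available, packing dimension would interact with projections as tamely as Hausdorff dimension, contrary to J\"arvenp\"a\"a's example and the Falconer--Howroyd phenomenon cited in the introduction. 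The paper's actual bridge is measure-theoretic and uses analyticity in a much deeper way: by Joyce--Preiss one finds a compact $K_{\mathbb{V}_{0}} \subset B_{\mathbb{V}_{0}}$ with $0 < \mathcal{P}^{\sigma'}(K_{\mathbb{V}_{0}}) < \infty$, Lubin's lemma pulls $\mathcal{P}^{\sigma'}\llcorner K_{\mathbb{V}_{0}}$ back to a measure $\mu$ on $B$, and Lemma \ref{OrLemma} then produces a positive-measure subset $B' \subset \spt\mu$ whose \emph{box}-dimension exceptional set is still large in the $\underline{\dim}_{\MB}$ sense, contradicting \eqref{form12_higher}.

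Note, finally, that this last step consumes the \emph{quantitative} form of the box-dimension result: Lemma \ref{OrLemma} converts a lower bound on $\underline{\dim}_{\MB}$ of the packing exceptional set into one for a box exceptional set, and the contradiction requires the explicit estimate $\underline{\dim}_{\MB}\{V : \overline{\dim}_{\B}B'_{\mathbb{V}} < \sigma\} \leq \max\{0,(n-m)(m-l)+\sigma-\m_{\B}'\}$. Your qualitative conclusion (``meagre and null'') is too weak to feed into this argument, so even after repairing the reduction you would need to upgrade your collision count to the counting form of the paper --- bounding the number of bad directions in an arbitrary $\delta$-separated subset of the Grassmannian, as in Corollary \ref{ORCor}, rather than just their measure. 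Your estimates do contain the necessary strength; it is the packaging as an a.e.\ statement that loses it.
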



\begin{remark}\label{bounded} {A routine argument shows that it is sufficient to prove Theorems \ref{main2_higher} and \ref{main1_higher} for bounded sets. Thus, we may and will only consider bounded sets in the sequel.}
\end{remark}

Throughout the paper we write $a \lesssim b$, if $a \leq Cb$ for some constant $C \geq 1$. Should we wish to emphasize that $C$ depends on some parameter $p$, we may write $a \lesssim_{p} b$. The chain $a \lesssim b \lesssim a$ is abbreviated to $a \asymp b$. If $d$ is a metric on a space $X$, we denote by $B_{d}(x,r)$ the closed ball with center $x \in X$ and radius $r$; the subscript $d$ is dropped, if the metric is obvious from the context.

\section{Acknowledgements}

We are grateful to Pertti Mattila for suggesting the problem, for stimulating discussions and for pointing out the reference \cite{Wh}. We also wish to thank the referee for his/her quick yet exceptionally careful review of the manuscript.

\section{Proof for the Hausdorff dimension}

\begin{proof}[Proof of Proposition \ref{p:hausdorff}]
Let $0<s<t<\Hd B \leq m-l$. By Frostman's lemma there exists a non-trivial finite measure $\mu$ with support in $B$, which satisfies the growth condition $\mu(B(x,r))\leq r^t$ for $x\in \mathbb{R}^n$ and $r>0$. It follows that the associated $s$-energy is finite,
\begin{displaymath}
I_s(\mu) := \int \int |x-y|^{-s}d\mu(x)d\mu(y)<\infty.
\end{displaymath}
By the definition of the push-forward measure $\pi_{\mathbb{V}\sharp}\mu$ and Fubini's theorem,
\begin{align*}
\int_{G(n-l,m-l)}&I_s(\pi_{\mathbb{V}\sharp}\mu)\,d \gamma_{n-l,m-l}(V)\\&= \int_B \int_B \int_{G(n-l,m-l)} |\pi_{\mathbb{V}}(x)-\pi_{\mathbb{V}}(y)|^{-s}\, d \gamma_{n-l,m-l}(V) d \mu(x) d \mu(y).
\end{align*}
Now if $(x,y)\in B\times B$, $x\neq y$, is such that
\begin{displaymath}
\sum_{i=n-l+1}^{n} (x_i-y_i)^2 \leq \sum_{i=1}^{n-l}(x_i-y_i)^2,
\end{displaymath}
we have
\begin{align*}
\int_{G(n-l,m-l)}|\pi_{\mathbb{V}}(x-y)|^{-s}\,d \gamma_{n-l,m-l}(V)&\leq \int_{G(n-l,m-l)}|\pi_{V}(\pi(x)-\pi(y))|^{-s}\,d \gamma_{n-l,m-l}(V),\\
&\lesssim |\pi(x)-\pi(y)|^{-s}\asymp|x-y|^{-s}
\end{align*}
where $\pi: \R^n \to \R^{n-l}$ is defined by $\pi(x)=(x_1,\ldots,x_{n-l})$. Here the second inequality follows from Corollary 3.12 in \cite{Mat2}.

On the other hand, if $(x,y)\in B \times B$ is such that
\begin{displaymath}
\sum_{i=1}^{n-l}(x_i-y_i)^2 < \sum_{i=n-l+1}^{n} (x_i-y_i)^2,
\end{displaymath}
the pointwise estimate
\begin{displaymath}
|\pi_{\mathbb{V}}(x)-\pi_{\mathbb{V}}(y)|^{-s}\leq \left(\sum_{i=n-l+1}^n (x_i-y_i)^2\right)^{-s/2} \asymp |x-y|^{-s}.
\end{displaymath}
holds. Together, these observations yield
\begin{displaymath}
\int_{G(n-l,m-l)} I_s(\pi_{\mathbb{V}\sharp}\mu)\,d \gamma_{n-l,m-l}(V) \lesssim \int_B \int_B |x-y|^{-s}\,d\mu(x) d \mu(y) = I_s(\mu)<\infty
\end{displaymath}
and thus $\Hd B_{\mathbb{V}}\geq s$ for almost every $V\in G(n-l,m-l)$. The result follows.
\end{proof}

{The method of bounding energy integrals used in the proof of Proposition \ref{p:hausdorff} cannot be applied to derive information on the Hausdorff dimension of projections of sets of dimension $s > m - l$; the problem is that integrals of the form
\begin{displaymath} \int_{G(n-l,m-l)}|\pi_V(x)-\pi_V(y)|^{-s}\,d \gamma_{n-l,m-l}(V) \end{displaymath}
can be infinite in that case, which means that the average of the energies $I_{s}(\pi_{\mathbb{V}\sharp}\mu)$ over the planes $\mathbb{V}$ may easily be infinite as well. This is natural, recalling that the projections of $\spt \mu$ can have dimension strictly smaller than $\spt \mu$ for all planes $\mathbb{V}$. Consequently, we need to devise a new quantity to replace $I_{s}(\pi_{\mathbb{V}\sharp}\mu)$, which (a) is bounded in size so that that it integrates over the planes $\mathbb{V}$, yet (b) contains all vital information on the dimension of $\spt \pi_{\mathbb{V}\sharp}\mu$. The trick is to discretise $\mu$ on a scale $\delta > 0$, thus turning $\mu$ into an $L^{2}$-function $\mu_{\delta}$. Then, projecting $\mu_{\delta}$ -- instead of $\mu$ -- onto the planes $\mathbb{V}$ results in a family of $L^{2}$-functions, denoted by $(\mu_{\delta})_{\mathbb{V}}$. It turns out that the $L^{2}$-norms $\|(\mu_{\delta})_{\mathbb{V}}\|_{2}$, for various $\delta > 0$, provide a substitute for $I_{s}(\pi_{\mathbb{V}\sharp}\mu)$ satisfying both requirements (a) and (b).}

{In contrast with many classical proofs related to projection phenomena, the measure $\mu$ we consider is not simply a Frostman measure supported on the set $B \subset \R^{n}$ we are projecting. Rather, $\mu$ is an abstract pull-back of a Frostman measure $\nu$ supported on one of the projections of $B$, namely the one with the (essentially) largest dimension. The key observation in the proof is that if the norms $\|\nu_{\delta}\|_{2}$ satisfy certain growth estimates, then the same estimates automatically transfer to the norms $\|(\mu_{\delta})_{\mathbb{V}}\|_{2}$, for almost all planes $\mathbb{V}$. Having related these growth estimates to the dimensions of $\spt (\mu_{\delta})_{\mathbb{V}}$, this translates into our claim that almost all projections of $\spt \mu$ have dimension at least $\dim \spt \nu$.}

{In our first lemma, we make precise the idea of discretising a measure $\mu$ on a scale $\delta > 0$, and relate the growth rate of $\|\mu_{\delta}\|_{2}$, as $\delta \searrow 0$, to the dimension of $\spt \mu$.}



\begin{lemma}\label{L1_higher} Let $\mu$ be a finite measure on $\R^{m}$, and let $(\psi_{\delta_j})_{j\in\mathbb{N}}$ be a collection of smooth functions of the form
\begin{displaymath} \psi_{\delta_j}(x) = \delta_j^{-m}\psi(x/\delta_j), \end{displaymath}
where $\psi$ is a fixed non-negative compactly supported smooth function, not equal to zero, and $\delta_j=2^{-j}$. Suppose that the growth of the $L^{2}$-norms of the convolutions $\mu_{\delta_j} := \mu \ast \psi_{\delta_j}$ is bounded as follows:
\begin{equation}\label{growth} \|\mu_{\delta_j}\|_{2}^{2} \lesssim \delta_j^{s - m}\quad\text{for some }0<s<m.\end{equation}
Then $\Hd \spt \mu \geq s$.
\end{lemma}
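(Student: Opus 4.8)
The plan is to deduce the dimension bound from a finite-energy criterion. Recall that if $\mu$ is a non-trivial finite measure on $\R^{m}$ whose $s'$-energy
\begin{displaymath}
I_{s'}(\mu) = \int \int |x-y|^{-s'}\,d\mu(x)\,d\mu(y)
\end{displaymath}
is finite, then $\Hd \spt \mu \geq s'$ (see e.g. \cite{Mat2}). Since the Hausdorff dimension of $\spt \mu$ dominates $\sup\{s' : I_{s'}(\mu) < \infty\}$, it suffices to prove that $I_{s'}(\mu) < \infty$ for every $0 < s' < s$; letting $s' \nearrow s$ then gives the claim. (We may assume $\mu \neq 0$, as otherwise there is nothing to prove.) The growth hypothesis \eqref{growth} enters only through the dyadic scales $\delta_{j}$, which is exactly the information that a dyadic decomposition of $I_{s'}(\mu)$ will call for.

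The bridge between $\|\mu_{\delta_j}\|_2^2$ and the energy is an estimate for the $\mu \times \mu$-measure of thin neighbourhoods of the diagonal. Writing $\tilde{\psi}(x) = \psi(-x)$ and expanding the square, I would compute, using Fubini to carry out the integration in the space variable first,
\begin{displaymath}
\|\mu_{\delta_j}\|_2^2 = \int \int (\psi_{\delta_j} \ast \tilde{\psi}_{\delta_j})(x - y)\,d\mu(x)\,d\mu(y).
\end{displaymath}
The autocorrelation $\phi := \psi \ast \tilde{\psi}$ is non-negative, continuous, and satisfies $\phi(0) = \|\psi\|_2^2 > 0$; hence there is a constant $c \in (0,1)$ with $\phi \geq \phi(0)/2$ on the ball $B(0,c)$, and consequently $(\psi_{\delta_j} \ast \tilde{\psi}_{\delta_j})(w) = \delta_j^{-m}\phi(w/\delta_j) \gtrsim \delta_j^{-m}$ whenever $|w| \leq c\delta_j$. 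Discarding the (non-negative) contribution of the pairs with $|x - y| > c\delta_j$ then yields
\begin{displaymath}
\int \mu(B(y, c\delta_j))\,d\mu(y) = (\mu \times \mu)\{(x,y) : |x-y| \leq c\delta_j\} \lesssim \delta_j^{m}\|\mu_{\delta_j}\|_2^2 \lesssim \delta_j^{s},
\end{displaymath}
where the final inequality is the hypothesis \eqref{growth}.

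Finally, I would estimate the energy by splitting the integral according to the dyadic annuli $\delta_{j+1} \leq |x - y| < \delta_j$. On each such annulus $|x-y|^{-s'} \leq \delta_{j+1}^{-s'} \asymp \delta_j^{-s'}$, while the $\mu \times \mu$-measure of the annulus is at most $\int \mu(B(y,\delta_j))\,d\mu(y) \lesssim_{c} \delta_j^{s}$ by the previous step (since $B(y,\delta_j) \subseteq B(y, c\delta_{j-k})$ for a fixed $k \asymp \log(1/c)$, the factor relating the radii $\delta_j$ and $c\delta_j$ is absorbed into the implicit constant). Summing the resulting geometric series,
\begin{displaymath}
I_{s'}(\mu) \lesssim_{c} \sum_{j} \delta_j^{-s'} \delta_j^{s} = \sum_{j} 2^{-j(s - s')} < \infty,
\end{displaymath}
since $s - s' > 0$, which completes the argument. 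The only genuinely delicate point is the second paragraph: one must verify that the bump's autocorrelation is bounded below near the origin, so that $\|\mu_{\delta_j}\|_2^2$ really does control the mass of $\mu$ on $c\delta_j$-balls. Everything after that reduction is the standard passage from a ball-growth condition to finite $s'$-energy, and thence to a lower bound on Hausdorff dimension.
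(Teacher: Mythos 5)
Your proof is correct, but it runs on the physical-space side of the argument, whereas the paper works on the Fourier side. The paper applies Parseval to convert the hypothesis \eqref{growth} into the frequency-localised bounds $\int |\hat{\mu}(x)|^{2}|\hat{\psi}(\delta_j x)|^{2}\,dx \lesssim \delta_j^{s-m}$, uses that $|\hat{\psi}(\delta_j x)|^{2} \geq c$ for $|x| \leq c\delta_j^{-1}$, and then sums over dyadic \emph{frequency} annuli in the Fourier representation $I_{r}(\mu) \asymp \int |\hat{\mu}(x)|^{2}|x|^{r-m}\,dx$ of \cite[Lemma 12.12]{Mat2} to conclude $I_{r}(\mu) < \infty$ for $r < s$. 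Your argument is precisely the dual of this: the lower bound on the autocorrelation $\phi = \psi \ast \tilde{\psi}$ near the origin is the physical-space counterpart of the paper's lower bound on $|\hat{\psi}|$ near the origin, your intermediate estimate $(\mu \times \mu)\{(x,y) : |x-y| \leq c\delta_j\} \lesssim \delta_j^{s}$ replaces the truncated Fourier-energy bound, and your sum over \emph{spatial} dyadic annuli replaces the sum over frequency annuli; both proofs then finish with the same standard implication that finite $s'$-energy for all $s' < s$ forces $\Hd \spt \mu \geq s$. What your route buys: it avoids Plancherel and the Fourier characterisation of the energy altogether, and it isolates a clean intermediate statement of independent interest, namely that \eqref{growth} forces the averaged ball-growth condition $\int \mu(B(y,r))\,d\mu(y) \lesssim r^{s}$ along dyadic radii. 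What the paper's route buys: all quantities stay on one side of Parseval, so the diagonal never appears. On that last point, note that your annuli decomposition covers only the off-diagonal part of $I_{s'}(\mu)$; you should add the one-line remark that $\{(x,y) : x = y\}$ is $\mu \times \mu$-null, which follows at once from your own displayed estimate, since the diagonal is contained in $\{(x,y) : |x-y| \leq c\delta_j\}$ for every $j$ and $\delta_j^{s} \to 0$ (equivalently, the hypothesis rules out atoms). With that sentence inserted, the argument is complete.
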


\begin{proof} Parseval's theorem and \eqref{growth} give
\begin{displaymath} \int_{\R^{m}} |\hat{\mu}(x)|^{2}|\hat{\psi}(\delta_j x)|^{2} \, dx = \int_{\R^{m}} |\widehat{\mu \ast \psi_{\delta_j}}(x)|^{2} \, dx = \|\mu_{\delta_j}\|_{2}^{2} \lesssim \delta_j^{s - m}. \end{displaymath}
Next, observe that there exists a constant $c > 0$ such that $|\hat{\psi}(\delta_j x)|^{2} \geq c$ for $|x| \leq c\delta_j^{-1}$. Let $0<r < s$. The $r$-energy of $\mu$ can be expressed through the Fourier transform $\hat \mu$, see for instance \cite[Lemma 12.12]{Mat2}.
Then,
\begin{align*} I_r(\mu) \asymp \int_{\R^{m}} |\hat{\mu}(x)|^{2}|x|^{r - m} \, dx & \lesssim 1 + \sum_{j = 1}^{\infty} 2^{j(r - m)} \int_{B(0,c2^{j})} |\hat{\mu}(x)|^{2} \, dx\\
& \lesssim 1 + \sum_{j = 1}^{\infty} 2^{j(r - s)} 2^{j(s - m)} \int |\hat{\mu}(x)|^{2}|\hat{\psi}(2^{-j} x)|^{2} \, dx\\
& \lesssim 1 + \sum_{j = 1}^{\infty} 2^{j(r - s)} < \infty, \end{align*}
which means that $I_{r}(\mu) < \infty$, and so $\Hd \spt \mu \geq r$.
\end{proof}

\begin{lemma}\label{comparison_higher} Let $\calD_{\delta}$ be a partition of $\R^{d}$ into dyadic cubes of side-length $\delta$; thus, $\calD_{1} := \{\Pi_{i=1}^d [m_i,m_i+1) : m_i \in \Z\}$, and $\calD_{\delta} := \{\delta Q : Q \in \calD_{1}\}$. Suppose that $\nu$ is a measure on $\R^{d}$ of the form
\begin{displaymath} \nu = \sum_{Q \in \calD_{\delta}} c_{Q}\mathcal{L}^d\llcorner_{Q}, \qquad c_{Q} \geq 0, \end{displaymath}
where $\mathcal{L}^d\llcorner_{Q}$ denotes the restriction of the Lebesgue measure to $Q$.
Then $$I_{s}(\nu) \lesssim \delta^{t - s}I_{t}(\nu)$$ for all $t,s$ with  $0 < t \leq s < d$. The implicit constants depend on $d,s$ and $t$, but not on $\delta > 0$ or the particular choice of $\nu$, as long as it is of the form indicated above.
\end{lemma}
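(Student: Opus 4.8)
The plan is to split the $s$-energy according to whether the two points lie within distance $\delta$ of each other or not, writing
\begin{equation*}
I_s(\nu) = \iint_{|x-y| \geq \delta} |x-y|^{-s}\,d\nu(x)\,d\nu(y) + \iint_{|x-y| < \delta} |x-y|^{-s}\,d\nu(x)\,d\nu(y).
\end{equation*}
The ``far'' part is handled immediately and uses nothing about the special form of $\nu$: since $t \leq s$, the factorisation $|x-y|^{-s} = |x-y|^{-t}\,|x-y|^{t-s}$ together with the bound $|x-y|^{t-s} \leq \delta^{t-s}$, valid whenever $|x-y| \geq \delta$, shows that the far part is at most $\delta^{t-s} I_t(\nu)$. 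All the work is in the ``near'' part, and this is where the hypothesis on $\nu$ enters.

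For the near part I would exploit that $\nu$ has density constant on each cube of $\calD_\delta$. Fix $x$ lying in a cube $Q_0 \in \calD_\delta$. Since $Q_0$ has side-length $\delta$, the ball $B(x,\delta)$ is contained in the union of $Q_0$ and its at most $3^d - 1$ neighbours (cubes sharing a face, edge or corner with $Q_0$); call this collection $N(Q_0)$, with the convention $Q_0 \in N(Q_0)$. On this union $\nu$ has density $\sum_{Q \in N(Q_0)} c_Q \mathbf{1}_Q$, so using the elementary radial computation $\int_{B(x,\delta)} |x-y|^{-s}\,dy = c_{d,s}\,\delta^{d-s}$ (finite because $s < d$) I obtain
\begin{equation*}
\int_{B(x,\delta)} |x-y|^{-s}\,d\nu(y) \lesssim \delta^{d-s} \sum_{Q \in N(Q_0)} c_Q.
\end{equation*}
Integrating this in $x$ against $\nu$ and using $\nu(Q_0) = c_{Q_0}\delta^d$ collapses the near part to a double sum over neighbouring cube pairs:
\begin{equation*}
\iint_{|x-y| < \delta} |x-y|^{-s}\,d\nu\,d\nu \lesssim \delta^{2d-s} \sum_{Q_0}\sum_{Q \in N(Q_0)} c_{Q_0} c_Q.
\end{equation*}

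The decisive step is to recognise the very same double sum inside $I_t(\nu)$, this time as a lower bound. Restricting the defining double integral of $I_t(\nu)$ to pairs $(x,y)$ with $x \in Q_0$ and $y \in Q$ for neighbouring $Q \in N(Q_0)$, and noting that such $x,y$ satisfy $|x-y| \lesssim_d \delta$ so that $|x-y|^{-t} \gtrsim_{d,t} \delta^{-t}$, I get
\begin{equation*}
I_t(\nu) \geq \sum_{Q_0}\sum_{Q \in N(Q_0)} c_{Q_0} c_Q \int_{Q_0}\!\int_Q |x-y|^{-t}\,dy\,dx \gtrsim \delta^{2d-t} \sum_{Q_0}\sum_{Q \in N(Q_0)} c_{Q_0} c_Q.
\end{equation*}
Combining the last two displays, the near part is $\lesssim \delta^{2d-s} \sum c_{Q_0}c_Q = \delta^{t-s}\bigl(\delta^{2d-t}\sum c_{Q_0}c_Q\bigr) \lesssim \delta^{t-s} I_t(\nu)$, which together with the far-part estimate yields the claim. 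I do not anticipate a genuine obstacle here; the one point demanding care is the bookkeeping of neighbouring cubes, namely ensuring that exactly the same family of pairs $\{(Q_0,Q) : Q \in N(Q_0)\}$ that controls the near part from above also appears in the lower bound for $I_t(\nu)$ (in particular the diagonal terms $Q = Q_0$ must be kept on both sides), so that the powers of $\delta$ match and the coefficients $c_Q$ cancel cleanly. All implicit constants depend only on $d, s, t$ -- through $3^d$, the radial integral $c_{d,s}$, and the diameter of adjacent cubes -- and are manifestly independent of $\delta$ and of the $c_Q$, as required.
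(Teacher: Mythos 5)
Your proof is correct and follows essentially the same route as the paper's: split the energy into near/far pairs at scale $\delta$, handle the far part by $|x-y|^{-s}\leq\delta^{t-s}|x-y|^{-t}$, reduce the near part to a sum over pairs of adjacent cubes via the radial integral $\int_{B(x,\delta)}|x-y|^{-s}\,dy\asymp\delta^{d-s}$, and lower bound $I_t(\nu)$ by $\delta^{2d-t}$ times a cube sum. The one (harmless, in fact slightly cleaner) deviation is that you bound $I_t(\nu)\gtrsim\delta^{2d-t}\sum_{Q_0}\sum_{Q\in N(Q_0)}c_{Q_0}c_Q$ directly, using that $|x-y|^{-t}\gtrsim_{d,t}\delta^{-t}$ on products of adjacent cubes, which lets you skip the paper's intermediate step of proving $\sum_{Q\sim Q'}c_Qc_{Q'}\lesssim\sum_Q c_Q^2$ (via the bounded-neighbour-count argument) together with its concentric-subcube lower bound $I_t(\nu)\gtrsim\delta^{2d-t}\sum_Q c_Q^2$.
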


\begin{proof} Define the relation $\sim$ on $\calD_{\delta} \times \calD_{\delta}$ by
\begin{displaymath} Q \sim Q' \qquad \Longleftrightarrow \qquad \overline{Q} \cap \overline{Q'} \neq \emptyset. \end{displaymath}
If $x \in \R^{d}$, let $Q_{x} \in \calD_{\delta}$ be the unique cube containing $x$. For $x,y \in \R^{n}$, we write $x \sim y$, if $Q_{x} \sim Q_{y}$. Then,
\begin{displaymath} I_{s}(\nu) = \iint_{\{(x,y) : x \sim y\}} |x - y|^{-s} \, d\nu x d\nu y + \iint_{\{(x,y) : x \not\sim y\}} |x - y|^{-s} \, d\nu x d\nu y,
\end{displaymath}
For the second term, it suffices to note that $x \not\sim y$ implies $|x - y| \geq \delta$, whence $|x - y|^{-s} \leq \delta^{t - s}|x - y|^{-t}$. To estimate the first term, write
\begin{align*} \iint_{\{(x,y) : x \sim y\}} |x - y|^{-s} \, d\nu x d\nu y & = \sum_{Q \sim Q'} c_{Q}c_{Q'} \int_{Q} \int_{Q'} |x - y|^{-s} \, dx dy\notag\\
& \leq \sum_{Q \sim Q'} c_{Q}c_{Q'} \int_{Q} \int_{B(y,c(d)\delta)} |x - y|^{-s} \, dx dy\\
& \asymp \delta^{2d-s} \sum_{Q \sim Q'} c_{Q}c_{Q'}, \end{align*}
where the constant $c(d)$ is chosen large enough so that for $Q'\sim Q$ and $y\in Q$, we have $Q' \subseteq B(y, c(d)\delta)$. Here we have used
\begin{align*}
\int_{B(y,c(d)\delta)} |x-y|^{-s}\,dx&= \int_{B(0,c(d)\delta)}|x|^{-s}\,d x= \int_0^{c(d)\delta}\int_{S^{d-1}}r^{-s}r^{d-1}\,d\sigma^{d-1} dr\\&= \frac{(c(d)\delta)^{d-s}}{d-s} \int_{S^{d-1}}d\sigma^{d-1}
\end{align*}
and $\int_Q \, dy = \delta^d$.

To bound the sum $\sum_{Q \sim Q'} c_Q c_{Q'}$, note that if $Q \in \calD_{\delta}$ is fixed, it has only a  finite number $N(d)$ of `neighbours' $Q' \in \calD_{\delta}$. In particular,
\begin{displaymath}
\sum_{Q\in\mathcal{D}_{\delta}} \left(\max_{Q\sim Q'}c_{Q'}\right)^2\leq N(d)\sum_{Q\in\mathcal{D}_{\delta}}c_Q^2
\end{displaymath}
and thus,
\begin{displaymath} \sum_{Q \sim Q'} c_{Q}c_{Q'} = \sum_{Q \in \calD_{\delta}} c_{Q} \sum_{Q' \in \calD_{\delta} : Q' \sim Q} c_{Q'} \lesssim \sum_{Q \in \calD_{\delta}} \left( c_{Q} \cdot \max_{Q' \sim Q} c_{Q'} \right)  \lesssim \sum_{Q \in \calD_{\delta}} c_{Q}^{2}, \end{displaymath}
which implies that
\begin{equation}\label{form6} I_{s}(\nu) \lesssim \delta^{2d-s}\sum_{Q \in \calD_{\delta}} c_{Q}^{2} + \delta^{t - s}I_{t}(\nu). \end{equation}
Let us next bound the $t$-energy $I_{t}(\nu)$ from below. If $Q \in \calD_{\delta}$, let $Q^{o}$ be the cube which is concentric with $Q$ but has only half the side-length. Then, if $x \in Q^{o}$, we have $B(x,\delta/c(d)) \subset Q$ for large enough $c(d)$, not necessarily the same as above, and this shows that
\begin{align*} I_{t}(\nu) \geq \sum_{Q \in \calD_{\delta}} c_{Q}^{2} \int_{Q^{o}} \int_{B(y,\delta/c(d))} |x - y|^{-t} \, dx dy \asymp \delta^{2d - t}\sum_{Q \in \calD_{\delta}} c_{Q}^{2},  \end{align*}
by a similar integration in spherical coordinates as before.
It now follows from \eqref{form6} that
\begin{displaymath} I_{s}(\nu) \lesssim \delta^{t - s} \left( \delta^{2d - t}\sum_{Q \in \calD_{\delta}} c_{Q}^{2} \right) + \delta^{t - s}I_{t}(\nu) \lesssim \delta^{t - s}I_{t}(\nu), \end{displaymath}
as claimed. \end{proof}

\begin{proof}[Proof of Theorem \ref{main2_higher}] Let $B \subset \R^{n}$ be an analytic set. Recall that
\begin{displaymath} \m_{\mathrm{H}} = \sup\{\Hd B_{V \times \R^l} : V \in G(n-l,m-l)\}\leq m, \end{displaymath}
and we intend to prove that $\Hd B_{\mathbb{V}} = \m_{\mathrm{H}}$ almost surely. To this end, we may assume that $\m_{\mathrm{H}} > 0$. Let $0 < \sigma < \m_{\mathrm{H}}$ and find a subspace $V_0 \in G(n-l,m-l)$ such that $\Hd B_{V_0\times \mathbb{R}^l} > \sigma$. We will identify all the subspaces $\mathbb{V} = V \times \R^l$ with $\R^{m}$, so that $B_{\mathbb{V}} \subset \R^{m}$, and the projections $\pi_{\mathbb{V}}=\pi_{V \times \R^l}$, $V \in G(n-l,m-l)$, will all be $\R^{m}$-valued. Let $\Psi$ be a non-negative radial symmetric smooth function on $\R^{n}$, satisfying
\begin{equation}\label{form10_higher} \chi_{B(0,1)} \leq \Psi \leq \chi_{B(0,2)}. \end{equation}
Then, for any $V \in G(n-l,m-l)$, the projection $\Psi_{\mathbb{V}}$ of $\Psi$ to $\R^{m}$, defined by
\begin{displaymath} \Psi_{\mathbb{V}}(x) = \int_{\pi_{\mathbb{V}}^{-1}\{x\}} \Psi \, d\calH^{n-m}, \end{displaymath}
is a non-negative compactly supported smooth function on $\R^{m}$, not identically equal to zero. Since $\Psi$ is radial symmetric, the projections $\Psi_{\mathbb{V}}$ are independent of $\mathbb{V}$; to emphasise this, we write $\psi := \Psi_{\mathbb{V}}$. The plan of the proof is to use Lemma \ref{L1_higher} as follows. We will find a finite Borel measure $\mu$ supported on the analytic $B$ such that the growth estimate
\begin{equation}\label{form7_higher} \|(\mu_{{V} \times \R^l})_{\delta_j}\|_{2}^{2} \lesssim \delta_j^{s - m}, \qquad 0 \leq s < \sigma, \end{equation}
holds for $\gamma_{n-l,m-l}$ almost every $V \in G(n-l,m-l)$, where $\mu_{V \times \R^{l}} = \mu_{\mathbb{V}}= \pi_{\mathbb{V}\sharp} \mu$, and $\delta_{j} = 2^{-j}$.
As in Lemma \ref{L1_higher}, the measure $(\mu_{\mathbb{V}})_{\delta_j}$ is defined to be the convolution $\mu_{\mathbb{V}} \ast \psi_{\delta_j}$, where $\psi_{\delta_j}(x) = \delta_j^{-m}\psi(x/\delta_j)$. According to Lemma \ref{L1_higher}, establishing \eqref{form7_higher} will complete the proof of Theorem \ref{main2_higher}.

Before defining the measure $\mu$, let us make one observation to simplify the proof of \eqref{form7_higher}.

{
\begin{lemma}
Let $\mu$ be a finite Borel measure on $\mathbb{R}^n$. Then for all $\delta>0$ and $\mathbb{V}=V\times \mathbb{R}^l$, $V\in G(n-l,m-l)$, we have
\begin{equation}\label{form8_higher} (\mu_{\mathbb{V}})_{\delta} = (\mu_{\delta})_{\mathbb{V}}, \end{equation}
where $\mu_{\delta} := \mu \ast \Psi_{\delta}$ and $(\mu_{\mathbb{V}})_{\delta}= \mu_{\mathbb{V}}\ast \psi_{\delta}$ with $\psi=\Psi_{\mathbb{V}}$.
\end{lemma}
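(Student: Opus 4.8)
The plan is to verify the identity pointwise, by evaluating both sides as non-negative functions on $\R^m$ at an arbitrary point $z \in \R^m$ and checking that they agree. Since every kernel involved is non-negative, Tonelli's theorem applies throughout, so all interchanges of integration are legitimate. I would write $\pi := \pi_{\mathbb{V}}$ and decompose $\R^n = \mathbb{V} \oplus \mathbb{V}^{\perp}$, choosing coordinates $x = (a,b)$ with $a = \pi(x) \in \R^m$ and $b \in \R^{n-m}$; under this identification, $\calH^{n-m}$ restricted to any fiber $\pi^{-1}\{z\} = \{z\} \times \R^{n-m}$ is just Lebesgue measure $db$.

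First I would compute the left-hand side. Unfolding the definition of the push-forward $\mu_{\mathbb{V}} = \pi_{\mathbb{V}\sharp}\mu$ and of the convolution with $\psi_{\delta}$ gives
\begin{displaymath}
(\mu_{\mathbb{V}})_{\delta}(z) = \int_{\R^m} \psi_{\delta}(z - w)\, d\mu_{\mathbb{V}}(w) = \int_{\R^n} \psi_{\delta}(z - \pi(x))\, d\mu(x).
\end{displaymath}
This is the target expression, and the whole point is to show that the right-hand side reduces to it. For the right-hand side, I would write out $\mu_{\delta} = \mu \ast \Psi_{\delta}$ and then its projection as a fiber integral, obtaining
\begin{displaymath}
(\mu_{\delta})_{\mathbb{V}}(z) = \int_{\pi^{-1}\{z\}} \left( \int_{\R^n} \Psi_{\delta}(y - x)\, d\mu(x) \right) d\calH^{n-m}(y),
\end{displaymath}
and then interchange the order of integration by Tonelli to bring the $x$-integral outside. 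The task therefore reduces to evaluating, for each fixed $x = (a,b)$, the fiber integral $\int_{\pi^{-1}\{z\}} \Psi_{\delta}(y - x)\, d\calH^{n-m}(y)$.

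The key step, and the only place where the special form of $\psi$ enters, is this fiber computation. Parametrising the fiber by $y = (z, v)$ with $v \in \R^{n-m}$ and using $\Psi_{\delta}(w) = \delta^{-n}\Psi(w/\delta)$, the substitution $v' = (v - b)/\delta$ turns the $(n-m)$-fold fiber integral into
\begin{displaymath}
\delta^{-m} \int_{\R^{n-m}} \Psi\left( \tfrac{z - a}{\delta},\, v' \right) dv' = \delta^{-m}\, \Psi_{\mathbb{V}}\!\left( \tfrac{z - a}{\delta} \right) = \psi_{\delta}(z - a),
\end{displaymath}
where the middle equality is exactly the defining formula $\Psi_{\mathbb{V}}(\cdot) = \int_{\R^{n-m}} \Psi(\cdot,\, v)\, dv$ for the projected kernel, and $a = \pi(x)$. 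Substituting this back shows that $(\mu_{\delta})_{\mathbb{V}}(z) = \int_{\R^n} \psi_{\delta}(z - \pi(x))\, d\mu(x)$, which is precisely the left-hand side computed above.

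I expect no genuine obstacle here: the content is entirely the change of variables that converts the $\delta^{-n}$ normalisation of $\Psi_{\delta}$ into the $\delta^{-m}$ normalisation of $\psi_{\delta}$ once the $n-m$ fiber directions are integrated out. The only points requiring care are the identification of $\calH^{n-m}$ on the affine fibers $\{z\}\times\R^{n-m}$ with Lebesgue measure, and the observation that the non-negativity of all kernels legitimises the repeated use of Tonelli; both are routine.
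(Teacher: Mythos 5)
Your proof is correct, but it takes a genuinely different route from the paper. The paper argues entirely on the Fourier side: it uses the projection--slice identities $\widehat{\mu_{\mathbb{V}}}(x)=\hat{\mu}(\pi_{\mathbb{V}}^{\mathrm{T}}x)$ and $\hat{\psi}(x)=\widehat{\Psi}(\pi_{\mathbb{V}}^{\mathrm{T}}x)$ together with the scaling law $\widehat{\Psi_{\delta}}(\xi)=\widehat{\Psi}(\delta\xi)$ to show that both $\widehat{(\mu_{\mathbb{V}})_{\delta}}$ and $\widehat{(\mu\ast\Psi_{\delta})_{\mathbb{V}}}$ equal $\hat{\mu}(\pi_{\mathbb{V}}^{\mathrm{T}}x)\widehat{\Psi}(\delta\cdot\pi_{\mathbb{V}}^{\mathrm{T}}x)$, and then concludes by the injectivity of the Fourier transform. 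You instead verify the identity pointwise in physical space: Tonelli plus the change of variables $v'=(v-b)/\delta$ on the affine fibers $\{z\}+\mathbb{V}^{\perp}$, which converts the $\delta^{-n}$ normalisation of $\Psi_{\delta}$ into the $\delta^{-m}$ normalisation of $\psi_{\delta}$. Your computation is valid: the identification of $\calH^{n-m}$ on the fibers with Lebesgue measure is exactly how the paper defines $\Psi_{\mathbb{V}}$, non-negativity legitimises every interchange, and the density of $\pi_{\mathbb{V}\sharp}(\mu_{\delta}\mathcal{L}^{n})$ is indeed the fiber integral by Fubini. Comparing the two: your argument is more elementary and slightly more general --- it needs no smoothness of $\Psi$, no injectivity of the Fourier transform, and not even the radial symmetry of $\Psi$ (which the paper uses only to make $\psi$ independent of $\mathbb{V}$, not for this lemma); it also makes visible exactly where the normalisation exponent changes from $n$ to $m$. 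The paper's Fourier proof, in turn, is notationally compact and sits naturally in the Fourier-analytic framework of the surrounding argument (Lemma \ref{L1_higher} and the energy estimates), though the slice identity $\hat{\psi}=\widehat{\Psi}\circ\pi_{\mathbb{V}}^{\mathrm{T}}$ it invokes is itself proved by the same Fubini computation you carry out explicitly. The one point worth a sentence in your write-up is the (routine) justification that the projection of the absolutely continuous measure $\mu_{\delta}\mathcal{L}^{n}$ has density given by the fiber integral, which you use as the definition of $(\mu_{\delta})_{\mathbb{V}}$; this matches the paper's convention, so no gap results.
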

\begin{proof}
Writing $\Psi_{\delta}(x) := \delta^{-n}\Psi(x/\delta)$, and denoting the transpose of the projection $\pi_{\mathbb{V}}$ by $\pi_{\mathbb{V}}^{\mathrm{T}} \colon \R^{m} \to \R^{n}$, we have the identity
\begin{align*} \widehat{(\mu_{\mathbb{V}})_{\delta}}(x) & = \widehat{\mu_{\mathbb{V}} \ast \psi_{\delta}}(x) = \widehat{\mu_{\mathbb{V}}}(x)\widehat{\psi_{\delta}}(x)\\
& = \hat{\mu}(\pi_{\mathbb{V}}^{\mathrm{T}}(x))\widehat{\Psi}(\pi_{\mathbb{V}}^{\mathrm{T}}(\delta x)) = \hat{\mu}(\pi_{\mathbb{V}}^{\mathrm{T}}(x))\widehat{\Psi}(\delta \cdot \pi_{\mathbb{V}}^{\mathrm{T}}(x))\\
& = \hat{\mu}(\pi_{\mathbb{V}}^{\mathrm{T}}(x))\widehat{\Psi_{\delta}}(\pi_{\mathbb{V}}^{\mathrm{T}}(x)) = \widehat{\mu \ast \Psi_{\delta}}(\pi_{\mathbb{V}}^{\mathrm{T}}(x)) = \widehat{(\mu \ast \Psi_{\delta})_{\mathbb{V}}}(x)  \end{align*}
for all $x \in \R^{m}$.
\end{proof}}

So, the order of discretising and projecting can be interchanged, and, in particular, $\|(\mu_{\mathbb{V}})_{\delta}\|_{2} = \|(\mu_{\delta})_{\mathbb{V}}\|_{2}$ for all $V \in G(n-l,m-l)$ and $\delta > 0$. But, in order to apply Lemma \ref{comparison_higher}, we will need something more. Let $\calD_{\delta}$ be the collection of dyadic cubes of side-length $\delta > 0$ in $\R^{n}$, as defined above for $d=n$. If $\mu$ is any finite Borel measure on $\R^{n}$, set
\begin{displaymath} \mu^{\delta} := \sum_{Q \in \calD_{\delta}} \frac{\mu(Q)}{\delta^{n}}\chi_{Q}. \end{displaymath}
Eventually, we will control the $L^{2}$-norms in \eqref{form7_higher} by estimating the $L^{2}$-norms of the projections $(\mu^{\delta})_{\mathbb{V}}$. This is reasonable thanks to the following lemma.
\begin{lemma} If $\mu$ is any finite Borel measure on $\R^{n}$, we have
\begin{equation}\label{form9_higher} \|(\mu_{\mathbb{V}})_{\delta}\|_{2} \lesssim \|(\mu^{\delta})_{\mathbb{V}}\|_{2} \lesssim \|(\mu_{\mathbb{V}})_{c\delta}\|_{2}\end{equation}
for any $\delta > 0$ and $V \in G(n-l,m-l)$. The implicit constants in \eqref{form9_higher} only depend on $n$ and the choice of the function $\Psi$, as in \eqref{form10_higher}, and the constant $c$ depends only on the dimension $n$.
\end{lemma}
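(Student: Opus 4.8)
The plan is to reduce both inequalities in \eqref{form9_higher} to \emph{pointwise} comparisons on $\R^{n}$ between the two regularisations of $\mu$ at scale $\delta$, namely the smoothing $\mu_{\delta} = \mu \ast \Psi_{\delta}$ and the piecewise constant density $\mu^{\delta}$, and then to transport these comparisons down to $\R^{m}$ by projecting. The sandwich $\chi_{B(0,1)} \leq \Psi \leq \chi_{B(0,2)}$ from \eqref{form10_higher} immediately yields the two-sided estimate $\delta^{-n}\mu(B(x,\delta)) \leq \mu_{\delta}(x) \leq \delta^{-n}\mu(B(x,2\delta))$ for every $x \in \R^{n}$, which will be the main tool throughout. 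Recall also that $\|(\mu_{\mathbb{V}})_{\delta}\|_{2} = \|(\mu_{\delta})_{\mathbb{V}}\|_{2}$ by \eqref{form8_higher}, so it suffices to work with $(\mu_{\delta})_{\mathbb{V}} = \pi_{\mathbb{V}\sharp}(\mu \ast \Psi_{\delta})$.

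For the right-hand inequality I would argue by a direct pointwise domination. If $Q_{x} \in \calD_{\delta}$ is the cube containing $x$, then $Q_{x} \subset B(x,\sqrt{n}\,\delta)$, so $\mu^{\delta}(x) = \delta^{-n}\mu(Q_{x}) \leq \delta^{-n}\mu(B(x,\sqrt{n}\,\delta)) \lesssim \mu_{\sqrt{n}\,\delta}(x)$, where the last step applies the lower bound above at scale $c := \sqrt{n}$. Since pushing a non-negative measure forward under $\pi_{\mathbb{V}}$ is monotone, the pointwise bound $\mu^{\delta} \lesssim \mu_{c\delta}$ passes to the projected densities, giving $(\mu^{\delta})_{\mathbb{V}} \lesssim (\mu_{c\delta})_{\mathbb{V}} = (\mu_{\mathbb{V}})_{c\delta}$ by \eqref{form8_higher}; taking $L^{2}$-norms finishes this half with $c = \sqrt{n}$.

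The left-hand inequality is the subtle one, and I expect it to be the main obstacle: $\mu_{\delta}$ \emph{cannot} be dominated pointwise by a constant multiple of $\mu^{\delta}$, because the averaging ball $B(x,2\delta)$ typically meets several neighbouring cubes while $\mu^{\delta}(x)$ only sees the single cube $Q_{x}$ (consider $\mu$ concentrated near a cube boundary). To repair this I would fix once and for all a radial probability density $\theta$ with $\theta \geq b > 0$ on $B(0,1)$ and establish the pointwise bound $\mu_{\delta} \lesssim \mu^{\delta} \ast \theta_{c\delta}$ for a suitable dimensional constant $c$. Indeed, $B(x,2\delta)$ meets only a bounded number $N(n)$ of cubes $Q \in \calD_{\delta}$, each contained in $B(x,c\delta)$ once $c \geq 2 + \sqrt{n}$; for such $Q$ and $y \in Q$ one has $\theta_{c\delta}(x-y) \geq b(c\delta)^{-n}$, so $\int_{Q}\theta_{c\delta}(x-y)\,dy \gtrsim 1$, and summing $\delta^{-n}\mu(Q)$ over the relevant cubes recovers $\delta^{-n}\mu(B(x,2\delta)) \geq \mu_{\delta}(x)$ up to constants. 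It then remains to project this bound: since pushforward under the linear map $\pi_{\mathbb{V}}$ commutes with convolution, $(\mu^{\delta} \ast \theta_{c\delta})_{\mathbb{V}} = (\mu^{\delta})_{\mathbb{V}} \ast (\theta_{c\delta})_{\mathbb{V}}$, and monotonicity of the pushforward gives $(\mu_{\delta})_{\mathbb{V}} \lesssim (\mu^{\delta})_{\mathbb{V}} \ast (\theta_{c\delta})_{\mathbb{V}}$ pointwise on $\R^{m}$. Young's inequality yields $\|(\mu_{\delta})_{\mathbb{V}}\|_{2} \lesssim \|(\theta_{c\delta})_{\mathbb{V}}\|_{1}\,\|(\mu^{\delta})_{\mathbb{V}}\|_{2}$, and because $\theta$ is a probability density its projection is again a probability density, so $\|(\theta_{c\delta})_{\mathbb{V}}\|_{1} = 1$. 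Combined with $\|(\mu_{\delta})_{\mathbb{V}}\|_{2} = \|(\mu_{\mathbb{V}})_{\delta}\|_{2}$ this is exactly the claimed left-hand inequality, with implicit constants depending only on $n$ and $\Psi$.
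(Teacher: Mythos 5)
Your proposal is correct and takes essentially the same route as the paper: both halves rest on the same pointwise comparisons between $\mu_{\delta}$ and $\mu^{\delta}$ at a dilated scale $c\delta$, transported to $\R^{m}$ via the commutation of projection with convolution as in \eqref{form8_higher}. The only difference is cosmetic and lies in the final $L^{2}$ step for the left-hand inequality: where you mollify with an auxiliary probability density $\theta$ and strip it off with Young's inequality $\|f \ast g\|_{2} \leq \|g\|_{1}\|f\|_{2}$, the paper mollifies with $\Psi_{c(n)\delta}$ itself and instead dominates the convolution by the Hardy--Littlewood maximal function, invoking the $L^{2}$-boundedness of $M$ --- your variant is, if anything, slightly more elementary.
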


\begin{proof} Fix $V \in G(n-l,m-l)$ and $\delta > 0$. Using \eqref{form10_higher}, we first make an estimate in $\R^{n}$:
\begin{align*}  \mu_{\delta}(x) & {\lesssim} \delta^{-n} \int_{B(x,2\delta)} \, d\mu y \leq \delta^{-n} \mathop{\sum_{Q \in \calD_{\delta}}}_{Q \cap B(x,2\delta) \neq \emptyset} \mu(Q)\\
& \leq \delta^{-n} \int_{B(x,c(n)\delta)} \sum_{Q \in \calD_{\delta}} \frac{\mu(Q)}{\delta^{n}}\chi_{Q}(y) \, dy {\lesssim} (\mu^{\delta})_{c(n)\delta}(x),  \end{align*}
where $c(n)$ is large enough so that $Q\cap B(x,2\delta) \neq \emptyset$ implies $Q\subset B(x,c(n)\delta)$.
Here $(\mu^{\delta})_{c(n)\delta} = \mu^{\delta} \ast \Psi_{c(n)\delta}$, as usual. Applying the previous estimate and \eqref{form8_higher} twice we obtain
\begin{displaymath} (\mu_{\mathbb{V}})_{\delta} = (\mu_{\delta})_{\mathbb{V}} \lesssim ((\mu^{\delta})_{c(n)\delta})_{\mathbb{V}} = ((\mu^{\delta})_{\mathbb{V}})_{c(n)\delta}, \end{displaymath}
where $((\mu^{\delta})_{\mathbb{V}})_{c(n)\delta} = (\mu^{\delta})_{\mathbb{V}} \ast \psi_{c(n)\delta}$, as before.
 Now it suffices to note that the convolution of any function $f \in L^{1}(\R^{m})$ with $\psi_{\delta}$ is controlled by a constant times the Hardy-Littlewood maximal function $Mf$, and the constant can be chosen to be independent of $f$. Indeed,
 \begin{align*}
 |f\ast \psi_{\delta}(x)|&\leq \int_{\R^m} |\psi_{\delta}(x-y)||f(y)|dy\leq \frac{1}{\delta^m} \int_{B(x,2\delta)}|f(y)|dy \\&\asymp \frac{1}{\mathcal{L}^m(B(x,2\delta))}\int_{B(x,2\delta)}|f(y)|dy\leq Mf(x).
 \end{align*}
Applying this to $(\mu^{\delta})_{\mathbb{V}}$, it follows that
\begin{displaymath} \|(\mu_{\mathbb{V}})_{\delta}\|_{2} \lesssim \|((\mu^{\delta})_{\mathbb{V}})_{c(n)\delta}\|_2 \lesssim \|M(\mu^{\delta})_{\mathbb{V}}\|_{2} \lesssim \|(\mu^{\delta})_{\mathbb{V}}\|_{2}, \end{displaymath}
where the upper bound is simply the boundedness of operator $M$ in $L^2$.

For the converse inequality let us observe that \eqref{form10_higher} guarantees the existence of a constant $c$ which depends only on the dimension $n$ such that
\begin{displaymath} c^{-n} \mu^{\delta}(x)\leq (c\delta)^{-n} \mu(B(x,c\delta))\leq \mu_{c\delta}(x) ,
\end{displaymath}
so that $\|(\mu^{\delta})_{\mathbb{V}}\|_2 \lesssim \|(\mu_{c\delta})_{\mathbb{V}}\|_2 = \|(\mu_{\mathbb{V}})_{c\delta}\|_2$ as desired.
\end{proof}

Next, we will define the measure $\mu$, for which \eqref{form7_higher} will be verified. At the beginning of the proof, we found a special subspace $V_0 \in G(n-l,m-l)$ such that $\Hd B_{\mathbb{V}_0} > \sigma$. Since $B_{\mathbb{V}_0} \subset \R^{m}$ is an analytic set, we may use Frostman's lemma to find a non-trivial finite Borel measure $\mu_{\mathbb{V}_0}$, supported on $B_{\mathbb{V}_0}$ and satisfying $I_{\sigma}(\mu_{\mathbb{V}_0}) < \infty$. We may then `pull back' the measure $\mu_{\mathbb{V}_0}$ inside the set $B$ using the following result of A. Lubin from 1974.
\begin{lemma}[Corollary 6 in \cite{Lu}]\label{LuLemma} Let $X,Y$ be analytic subsets of complete separable metric spaces, and let $f \colon X \to Y$ be a Borel function. Then, if $\nu$ is a measure supported on $f(X) \subset Y$, there exists a Borel measure $\mu$ on $X$ such that $f_{\sharp}\mu = \nu$.
\end{lemma}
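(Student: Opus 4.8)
The plan is to realise $\mu$ as the push-forward of $\nu$ under a measurable right inverse of $f$. Concretely, I would look for a map $g \colon f(X) \to X$ satisfying $f \circ g = \mathrm{id}$ on $f(X)$ and measurable enough that $\mu := g_{\sharp}\nu$ is a well-defined Borel measure on $X$. Granting such a $g$, the verification is immediate: since $\nu$ is supported on $f(X)$ and $f(g(y)) = y$ there, one has $f_{\sharp}\mu = f_{\sharp}g_{\sharp}\nu = (f \circ g)_{\sharp}\nu = \nu$. Thus the entire problem reduces to producing a sufficiently measurable section of $f$ over its image.

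To build $g$, I would invoke a uniformization theorem for analytic sets. Consider
\begin{displaymath} P := \{(y,x) \in Y \times X : f(x) = y\}, \end{displaymath}
the graph of $f$ with its coordinates reversed. Because $f$ is Borel, $X$ is analytic, and the ambient spaces are Polish, $P$ is an analytic subset of the product, and clearly its projection to the first coordinate is $f(X)$. The Jankov--von Neumann uniformization theorem then furnishes a uniformizing function $g \colon f(X) \to X$, that is, $(y,g(y)) \in P$ for every $y \in f(X)$, which is measurable with respect to the $\sigma$-algebra generated by the analytic sets, hence in particular universally measurable. By construction $f(g(y)) = y$ for all $y \in f(X)$, so $g$ is exactly the section we sought.

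Finally I would confirm that $\mu := g_{\sharp}\nu$ is legitimate and does the job. As $g$ is universally measurable it is $\nu$-measurable, so $g^{-1}(A)$ belongs to the $\nu$-completion of the Borel $\sigma$-algebra for every Borel $A \subseteq X$, and $\mu(A) := \nu(g^{-1}(A))$ defines a finite Borel measure on $X$. For Borel $E \subseteq Y$,
\begin{displaymath} f_{\sharp}\mu(E) = \nu\bigl((f \circ g)^{-1}(E)\bigr) = \nu(E), \end{displaymath}
the last equality holding because $f \circ g$ is the identity on $f(X)$ while $\nu$ assigns full measure to $f(X)$. The one genuinely nontrivial ingredient is the selection step, and it is precisely here that the analyticity hypothesis on $X$ (through $P$) is indispensable: a Borel section of $f$ need not exist, and one must be content with a universally measurable one. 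Fortunately, universal measurability is all that the push-forward construction requires, which is in effect the content of Lubin's corollary.
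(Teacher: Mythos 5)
Your proof is correct, but note that the paper contains no proof of this statement to compare against: the lemma is quoted as Corollary 6 of Lubin's paper \cite{Lu}, and the authors use it as a black box. In fact, the title of that reference, \emph{Extensions of measures and the von Neumann selection theorem}, already signals that the original argument runs along essentially the lines you chose: produce a universally measurable right inverse $g$ of $f$ on $f(X)$ by a von Neumann--type (Jankov--von Neumann) uniformization of the reversed graph, and push $\nu$ forward through it. Your handling of the two genuine technical points is sound: the set $P$ is indeed analytic, since the graph of a Borel function is relatively Borel in $X \times Y$ and a relatively Borel subset of the analytic set $X \times Y$ is analytic; and since $g$ is only measurable with respect to the $\sigma$-algebra generated by the analytic sets, you correctly pass to the $\nu$-completion, using that such sets --- including $f(X)$ itself and every $g^{-1}(A)$ --- are universally measurable, so that $\mu(A) := \bar{\nu}(g^{-1}(A))$ is a well-defined Borel measure and $f_{\sharp}\mu(E) = \bar{\nu}(E \cap f(X)) = \nu(E)$. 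Two cosmetic caveats: your final verification tacitly assumes $\nu$ finite (or $\sigma$-finite), which is harmless here since the paper only applies the lemma to finite measures (a Frostman measure and a restricted packing measure); and it would be worth one sentence to record why a genuinely Borel section need not exist, which is exactly the remark you make at the end. In short, you have reconstructed, correctly and by the same mechanism as the cited source, a proof that the paper itself delegates to the literature.
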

We apply the lemma with $X = B$, $Y = B_{\mathbb{V}_0}$ and $\nu = \mu_{\mathbb{V}_0}$ to obtain a measure $\mu$, supported on $B$, and such that
\begin{displaymath} \pi_{\mathbb{V}_0\sharp}\mu = \mu_{\mathbb{V}_0}. \end{displaymath}
For $V \in G(n-l,m-l)$, we write $\mu_{\mathbb{V}} := \pi_{V \times \mathbb{R}^l\sharp}\mu$; clearly, the two definitions of $\mu_{\mathbb{V}_0}$ coincide. For $V = V_0$, we have the estimate
\begin{align*}\|(\mu^{\delta})_{\mathbb{V}_0}\|_{2}^{2} \lesssim \|(\mu_{\mathbb{V}_0})_{c\delta}\|_{2}^{2} & \asymp \int_{\R^{m}} |\widehat{\mu_{\mathbb{V}_0}}(x)|^{2}|\hat{\psi}(c\delta x)|^{2} \, dx\\
& \lesssim (c\delta)^{\sigma - m} \int_{\R^{m}} |\widehat{\mu_{\mathbb{V}_0}}(x)|^{2}|x|^{\sigma - m}\,dx \lesssim \delta^{\sigma - m}, \end{align*}
using the rapid decay bound $|\widehat{\psi}(y)| \lesssim |y|^{(\sigma - m)/2}$ for $y \in \R^{m}$ (note that $\psi$ is a Schwartz function) and the finiteness of the $\sigma$-energy of $\mu_{\mathbb{V}_0}$.

So, we have \eqref{form7_higher} for $V = V_0$. Using \textbf{only this information}, we intend to prove \eqref{form7_higher} for $\gamma_{n-l,m-l}$ almost all directions $V \in G(n-l,m-l)$.

Given $h \in \R^l$, let $\mu^{\delta}_{h} \colon \R^{n-l} \to [0,\infty)$ be the function $\mu^{\delta}_{h}(x) = \mu^{\delta}(x,h)$. Recalling the definition of $\mu^{\delta}$, it is clear that the functions -- or measures -- $\mu^{\delta}_{h}$ have precisely the form of the measure $\nu$ in Lemma \ref{comparison_higher} for $d=n-l$. In particular,
\begin{equation}\label{form11_higher} I_{m-l}(\mu_{h}^{\delta}) \lesssim_{t,l,m,n} \delta^{t - (m-l)}I_{t}(\mu_{h}^{\delta}), \qquad 0 < t \leq m-l. \end{equation}
If $V \in G(n-l,m-l)$, write $(\mu_{h}^{\delta})_{V}$ for the orthogonal projection of $\mu_{h}^{\delta}$ onto the subspace $V \subset \R^{n-l}$. Before making the final estimates, we need to record the following upper bound for the energy in terms of the $L^2$-norm.

{
\begin{lemma} Let $\nu$ be a positive finite compactly supported Borel measure on $\R^{m-l}$, which is also an $L^{2}$-function. Then,
\begin{equation}\label{form19_higher} I_{t}(\nu) \lesssim_{l,m,t} \|\nu\|_{2}^{2}, \quad 0 < t < m-l, \end{equation}
\end{lemma}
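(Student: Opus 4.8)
The plan is to recognise $I_t(\nu)$ as a convolution quadratic form and to apply Young's convolution inequality, using crucially that $\nu$ is simultaneously an $L^2$ function and compactly supported. Abusing notation so that $\nu$ denotes both the measure and its density, and writing $d := m-l$, we have
\[ I_t(\nu) = \int_{\R^d}\int_{\R^d} |x-y|^{-t}\,\nu(x)\nu(y)\,dx\,dy = \int_{\R^d} \nu(x)\,(k_t \ast \nu)(x)\,dx, \]
where $k_t(z) := |z|^{-t}$ is the Riesz kernel.

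First I would truncate the kernel. Since $\nu$ is compactly supported, $R := \operatorname{diam}(\spt\nu) < \infty$, and the integrand above vanishes whenever $|x-y| > R$; hence $k_t$ may be replaced by the truncation $K := k_t\cdot\chi_{B(0,R)}$ without altering the integral. Cauchy--Schwarz then gives
\[ I_t(\nu) = \int_{\R^d} \nu(x)\,(K\ast\nu)(x)\,dx \leq \|\nu\|_2\,\|K\ast\nu\|_2, \]
and Young's inequality bounds $\|K\ast\nu\|_2 \leq \|K\|_1\,\|\nu\|_2$. The whole estimate thus reduces to the integrability of the truncated kernel, which a computation in spherical coordinates settles:
\[ \|K\|_1 = \int_{B(0,R)} |z|^{-t}\,dz = \sigma^{d-1}(S^{d-1})\int_0^R r^{d-1-t}\,dr = \sigma^{d-1}(S^{d-1})\,\frac{R^{d-t}}{d-t}, \]
finite precisely because the hypothesis $t < d = m-l$ keeps the exponent $d-1-t$ above $-1$. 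Combining the three displays yields $I_t(\nu) \lesssim_{l,m,t} R^{m-l-t}\,\|\nu\|_2^2$.

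The one point requiring care is that the Riesz kernel $|z|^{-t}$ is \emph{not} globally integrable when $t \leq d$ --- the divergence has moved from the origin to infinity --- so the direct bound $\|k_t\ast\nu\|_2 \leq \|k_t\|_1\|\nu\|_2$ is unavailable, and it is exactly the compact support of $\nu$ that licenses the truncation to $B(0,R)$ and rescues Young's inequality. The resulting constant carries a factor $R^{m-l-t}$; since in every application $\nu$ is supported in a fixed bounded set (by Remark \ref{bounded} we deal only with bounded $B$, and $\nu$ is obtained by projecting the boundedly supported $\mu^{\delta}$), the radius $R$ is under control and may be absorbed into a constant depending only on $l$, $m$ and $t$.
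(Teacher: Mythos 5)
Your proof is correct, but it takes a genuinely different route from the paper's. You work entirely in physical space: truncate the Riesz kernel to $B(0,R)$ with $R = \operatorname{diam}(\spt\nu)$ (legitimate, since the integrand vanishes off the diagonal strip $|x-y|\leq R$), then apply Cauchy--Schwarz and Young's inequality $\|K\ast\nu\|_{2}\leq\|K\|_{1}\|\nu\|_{2}$, with the hypothesis $t<m-l$ entering exactly where it should, namely the local integrability of $|z|^{-t}$. The paper instead argues on the Fourier side: it invokes the representation $I_{t}(\nu)\asymp\int|\hat{\nu}(x)|^{2}|x|^{t-(m-l)}\,dx$, uses the positive definiteness of $\hat{\nu}$ (so $|\hat{\nu}(x)|\leq\hat{\nu}(0)$) to handle frequencies $|x|\leq 1$, and Plancherel for $|x|>1$, arriving at $I_{t}(\nu)\lesssim\|\nu\|_{1}^{2}+\|\hat{\nu}\|_{2}^{2}\lesssim\|\nu\|_{2}^{2}$. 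Both arguments use compact support in an essential way -- yours to truncate the kernel, the paper's in the final step $\|\nu\|_{1}\lesssim\|\nu\|_{2}$ (Cauchy--Schwarz against the indicator of the support) -- and consequently both constants secretly depend on the size of $\spt\nu$, a dependence your write-up makes explicit as the factor $R^{m-l-t}$ and correctly observes is harmless since the measures $(\mu_{h}^{\delta})_{V_{0}}$ arising in the application have uniformly bounded supports; the paper's statement leaves this dependence implicit. What each approach buys: yours is more elementary (no Fourier transform, no positive definiteness, works for any compactly supported nonnegative $L^{2}$ density) and yields a transparent, quantitative constant; the paper's version is shorter given the machinery already in play, since the Fourier representation of energies is used repeatedly elsewhere in the argument (e.g.\ in Lemma \ref{L1_higher}).
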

\begin{proof}
The Fourier transform of the finite positive measure $\nu$ is a positive definite function, so it satisfies the pointwise estimate $|\hat{\nu}(x)| \leq \hat{\nu}(0)$ for $x \in \R^{m - l}$, see for instance \cite[p. 198, p.220]{Bo}. Using this we obtain
\begin{align*}
I_{t}(\nu) & \asymp_{l,m,t} \int_{\R^{m - l}} |\hat{\nu}(x)|^{2}|x|^{t - (m - l)} \, dx\\
& \leq |\hat{\nu}(0)|^{2}\int_{B(0,1)} |x|^{t-(m-l)}dx + \int_{\R^{m - l} \setminus B(0,1)} |\hat{\nu}(x)|^{2} \, dx\\
& \lesssim_{l,m,t} \|\nu\|_{1}^{2} + \|\hat{\nu}\|_{2}^{2} \lesssim \|\nu\|_2^2,
\end{align*}
as claimed. 
\end{proof}}

We will also apply the estimate
\begin{align}\label{form20_higher} \int_{G(n-l,m-l)} \|\nu_{V}\|_{2}^{2} \, d\gamma_{n-l,m-l}(V)  \lesssim  I_{m-l}(\nu), \end{align}
valid for any finite measure $\nu$ on $\R^{n-l}$ with $I_{m-l}(\nu) < \infty$. This can be seen for instance from Theorem 3.1 in \cite{Mat1}.

Let $0 < t < m-l$. Combining \eqref{form20_higher}, \eqref{form11_higher} and \eqref{form19_higher}
 we have
\begin{align*} \int_{G(n-l,m-l)} \|(\mu_{\mathbb{V}})_{\delta}\|_{2}^{2} \, d\gamma_{n-l,m-l}(V) & \lesssim \int_{G(n-l,m-l)} \int_{\R^l} \|(\mu^{\delta}_{h})_{V}\|_{2}^{2} \, dh \, d\gamma_{n-l,m-l}(V)\\
& = \int_{\R^l} \int_{G(n-l,m-l)} \|(\mu_{h}^{\delta})_{V}\|_{2}^{2} \, d\gamma_{n-l,m-l}(V) \, dh\\
& {\lesssim} \int_{\R^l} I_{m-l}(\mu_{h}^{\delta}) \, dh \\& {\lesssim_{t}} \delta^{t - (m-l)}\int_{\R^l} I_{t}(\mu_{h}^{\delta}) \, dh\\
& \leq \delta^{t - (m-l)}\int_{\R^l} I_{t}((\mu_{h}^{\delta})_{V_0}) \, dh\\& {\lesssim_{t}} \delta^{t - (m-l)} \int_{\R^l} \|(\mu_{h}^{\delta})_{V_0}\|_{2}^{2} \, dh\\
& = \delta^{t - (m-l)}\|(\mu^{\delta})_{\mathbb{V}_0}\|_{2}^{2} \lesssim \delta^{t - (m-l) + \sigma - m}. \end{align*}

If $s < \sigma$, we may choose $t < m-l$ so close to $m-l$ that $s < t - (m-l) + \sigma$. By Chebyshev's inequality
\begin{align*}
\gamma_{n-l,m-l} (\{V\in G(n-l,m-l) : & \|(\mu_{\mathbb{V}})_{\delta}\|_{2}^{2} \geq \delta^{s - m}\})\\&\leq \frac{1}{\delta^{s-m}} \int_{G(n-l,m-l)}\|(\mu_{\mathbb{V}})_{\delta}\|_2^2\,d\gamma_{n-l,m-l}(V)
\end{align*}
we obtain
\begin{displaymath} \gamma_{n-l,m-l}(\{V \in G(n-l,m-l) : \|(\mu_{\mathbb{V}})_{\delta}\|_{2}^{2} \geq \delta^{s - m}\}) \lesssim \delta^{t - (m-l) + \sigma - s}. \end{displaymath}

For $\delta_j = 2^{-j}$, $j \in \N$, combining this estimate with the easier Borel-Cantelli lemma shows that
\begin{displaymath}
\gamma_{n-l,m-l}\left(\bigcap_{p=1}^{\infty}\bigcup_{j\geq p}\{V\in G(n-l,m-l) : \|(\mu_{\mathbb{V}})_{\delta_j}\|_2^2\geq \delta_j^{s-m}\}\right)=0
\end{displaymath}
 and thus that the inequality $\|(\mu_{\mathbb{V}})_{2^{-j}}\|_{2}^{2} \geq 2^{j(m - s)}$ can hold \textbf{infinitely often} only for a set of $V$'s of $\gamma_{n-l,m-l}$ measure zero. For the rest of the subspaces $V$, we have $\|(\mu_{\mathbb{V}})_{2^{-j}}\|_{2}^{2} \lesssim_{V} 2^{j(m - s)}$ for $j \in \N$, and, according to Lemma \ref{L1_higher}, this implies $\Hd B_{\mathbb{V}} \geq s$ for every such $V\in G(n-l,m-l)$.
\end{proof}

\section{Proofs for upper box and packing dimensions}

A quick word on notation before we begin. If $E \subset \R^{n}$ is a bounded set and $\delta > 0$, we denote by $N(E,\delta)$ the least number of (closed) balls of radius $\delta$ required to cover $E$. The upper and lower box dimensions (Minkowski dimensions) of $E$ are defined by
\begin{displaymath} \underline{\dim}_{\B} E := \liminf_{\delta \to 0} \frac{\log N(E,\delta)}{-\log \delta} \quad \text{and} \quad \overline{\dim}_{\B} E := \limsup_{\delta \to 0} \frac{\log N(E,\delta)}{-\log \delta}. \end{displaymath}
Analogous definitions can be made for totally bounded sets in metric spaces, for instance, we will use the concept of box dimensions on the Grassmanian.

The packing dimension of a set $E\subset \R^{n}$ is defined as
\begin{displaymath}
\dim_{\p} E := \inf \left\{\sup_j \overline{\dim}_{\B}F_j:\; E \subset \bigcup_{j\in \N}F_j\right\}.
\end{displaymath}

{Theorem \ref{main1_higher} contains statements concerning both upper box and packing dimension; accordingly, our proof divides into two parts. However, it turns out that the assertions for packing dimension easily reduce to their analogues for upper box dimension (via Lemma \ref{OrLemma}), so all the main ingredients of the proof are contained in the first part.}

{Let us briefly explain these ingredients in the lowest-dimensional interesting case, namely when $n = 3, m = 2$ and $l = 1$. Thus, we are considering projections in $\R^{3}$ onto the `vertical' $2$-dimensional subspaces (containing the $z$-axis). The key observation is, in fact, a result concerning planar sets and their projections onto one-dimensional subspaces. Fix $\delta > 0$, and let $K \subset \R^{2}$ be a bounded set. Suppose that for \textbf{some} one-dimensional subspace $L \subset \R^{2}$ the projection of $K$ onto $L$ contains $N \in \N$ $\delta$-separated points. Then, the conclusion is that for `almost' every one-dimensional subspace in $\R^{2}$ the projection of $K$ contains $\gtrsim N$ $\delta$-separated points (where the correct interpretation of $\gtrsim$ slightly differs from our normal usage).}

{How do we use this observation for sets in $\R^{3}$? To begin with, we slice our bounded set $B \subset \R^{3}$ into disjoint horizontal pieces $B_{H}$ of height $\delta$. These pieces are `planar enough'  for the observation above to be applied. Namely, a moment's thought reveals that, at scale $\delta$, the projections of the horizontal pieces onto the vertical subspaces $\mathbb{V} \subset \R^{3}$ resemble projections of certain planar sets onto one-dimensional subspaces. In particular, if $N(\pi_{\mathbb{V}_{0}}(B_{H}),\delta) = N \in \N$ for \textbf{some} vertical subspace $\mathbb{V}_{0} \subset \R^{3}$, then the inequality $N(\pi_{\mathbb{V}}(B_{H}),\delta) \gtrsim N$ holds, in a suitable sense, for `almost' every vertical subspace $\mathbb{V} \subset \R^{3}$. Since $N(\pi_{\mathbb{V}}(B),\delta)$ is roughly the sum of the numbers $N(\pi_{\mathbb{V}}(B_{H}),\delta)$ over all the horizontal pieces $B_{H}$, this property of the sets $B_{H}$ transfers easily to the same property for the entire set $B$: if $N(\pi_{\mathbb{V}_{0}}(B),\delta) = N$ for \textbf{some} vertical subspace $\mathbb{V}_{0} \subset \R^{3}$, then $N(\pi_{\mathbb{V}}(B),\delta) \gtrsim N$ for `almost' every vertical subspace $\mathbb{V} \subset \R^{3}$. The assertion of Theorem \ref{main1_higher} for upper box dimension follows immediately. }


We begin with an estimate for the volumes of balls on the Grassmannian. In all likelihood, the proposition is well-known, but we were unable to find a direct reference. Consequently, we chose to include a proof in Appendix \ref{volumes}.


\begin{proposition}\label{p:ball_est}
Let $0<m<n$. Then there exist constants $0<c<C<\infty$ and $\delta_0>0$ such that
\begin{displaymath}
c \delta^{m(n - m)} \leq \gamma_{n,m}(B(V,\delta)) \leq C \delta^{m(n - m)}
\end{displaymath}
for all $V\in G(n,m)$ and all $0<\delta <\delta_0$. Here the ball $B(V,\delta)$ is defined using the projection distance $d_{\pi}(V,W) = \|\pi_{V} - \pi_{W}\|$.
\end{proposition}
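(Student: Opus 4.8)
The plan is to exploit the homogeneity of the Grassmannian to reduce the estimate to a single point, and then to work in an explicit coordinate chart. First I would observe that the projection metric is $O(n)$-invariant: for $g \in O(n)$ one has $\pi_{gV} = g\pi_V g^{\mathrm{T}}$, so that $d_{\pi}(gV, gW) = \|g(\pi_V - \pi_W)g^{\mathrm{T}}\| = \|\pi_V - \pi_W\| = d_{\pi}(V,W)$, using that conjugation by an orthogonal matrix preserves the operator norm. Since $\gamma_{n,m}$ is $O(n)$-invariant and $O(n)$ acts transitively on $G(n,m)$, the quantity $\gamma_{n,m}(B(V,\delta))$ does not depend on $V$. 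It therefore suffices to prove the two-sided bound at the single point $V_0 = \R^m \times \{0\}$, and this will automatically furnish a single threshold $\delta_0$ valid for all $V$.

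Next I would introduce the standard graph chart near $V_0$: to an $(n-m)\times m$ matrix $A$ associate the subspace $V_A = \{(x, Ax) : x \in \R^m\}$. The assignment $\phi \colon A \mapsto V_A$ is a diffeomorphism from the space of matrices (of dimension $m(n-m)$) onto a neighborhood of $V_0$, with $\phi(0) = V_0$. The heart of the argument is to show that $d_{\pi}(V_0, V_A) \asymp \|A\|$ for $A$ in a bounded neighborhood of the origin. For this I would write the projection in block form,
\begin{displaymath} \pi_{V_A} = \begin{pmatrix} (I + A^{\mathrm{T}}A)^{-1} & (I+A^{\mathrm{T}}A)^{-1}A^{\mathrm{T}} \\ A(I+A^{\mathrm{T}}A)^{-1} & A(I+A^{\mathrm{T}}A)^{-1}A^{\mathrm{T}}\end{pmatrix}, \end{displaymath}
and note that $A \mapsto \pi_{V_A}$ is smooth with $\pi_{V_0} = \operatorname{diag}(I,0)$ and differential at $0$ equal to $H \mapsto \left(\begin{smallmatrix} 0 & H^{\mathrm{T}} \\ H & 0 \end{smallmatrix}\right)$. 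Since this differential is injective and norm-preserving (the operator norm of the block matrix equals $\|H\|$, as its eigenvalues are the $\pm$ singular values of $H$), a Taylor expansion yields $c_1\|A\| \leq \|\pi_{V_A} - \pi_{V_0}\| \leq c_2\|A\|$ for all sufficiently small $A$.

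With this comparability in hand, the remaining steps are routine. Because $\gamma_{n,m}$ is (a normalisation of) the Riemannian volume of the invariant metric, its pushforward under $\phi^{-1}$ has a smooth, strictly positive density with respect to Lebesgue measure $\mathcal{L}^{m(n-m)}$ near $0$, hence bounded above and below by positive constants on a fixed neighborhood. Combining this with the bi-Lipschitz estimate of the previous paragraph, for $0 < \delta < \delta_0$ we obtain
\begin{displaymath} \gamma_{n,m}(B(V_0,\delta)) \asymp \mathcal{L}^{m(n-m)}(\{A : \|A\| \leq c\delta\}) \asymp \delta^{m(n-m)}, \end{displaymath}
where the last comparison is the elementary fact that a norm-ball of radius $r$ in an $m(n-m)$-dimensional vector space has Lebesgue measure a constant multiple of $r^{m(n-m)}$.

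I expect the main obstacle to be the comparability $d_{\pi}(V_0, V_A) \asymp \|A\|$, i.e. verifying that the projection metric is genuinely bi-Lipschitz to the Euclidean coordinate distance near $V_0$: one must check that the differential of $A \mapsto \pi_{V_A}$ at the origin is injective with controlled operator norm, and that the Taylor remainder is of genuine second order uniformly on a neighborhood. Alternatively, one can bypass the explicit block computation by invoking the general principle that the chordal (extrinsic) distance induced by the smooth embedding $V \mapsto \pi_V$ of the \emph{compact} manifold $G(n,m)$ is locally bi-Lipschitz to the intrinsic Riemannian distance, which in turn is bi-Lipschitz to the coordinate distance in any smooth chart; the compactness guarantees uniform constants, matching the homogeneity reduction of the first paragraph.
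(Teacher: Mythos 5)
Your proposal is correct, but it takes a genuinely different route from the paper's. Both arguments begin the same way: $O(n)$-invariance of $d_{\pi}$ and of $\gamma_{n,m}$, plus transitivity of the action, reduce the two-sided bound to a single plane with uniform $\delta_{0}$. From there the paper proceeds \emph{extrinsically}: it realises $G(n,m)$ (two-to-one) as the manifold of simple unit $m$-vectors in $\bigwedge_{m}\R^{n}$, where the $m(n-m)$-dimensional Hausdorff measure automatically has positive finite density at some point; the technical core is then the bi-Lipschitz equivalence of the induced metric with $d_{\pi}$, proved via Lemma \ref{bases} (orthonormal bases aligned through the singular value decomposition) and Whitney's inequalities, and finally $\gamma_{n,m}$ is identified with a constant multiple of $\calH^{m(n-m)}_{\pi}$ by Mattila's uniqueness theorem for uniformly distributed measures \cite[Theorem 3.4]{Mat2} -- notably, the paper states it could \emph{not} handle $\gamma_{n,m}$ directly in a chart. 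You instead work \emph{intrinsically} in one graph chart: your block formula for $\pi_{V_{A}}$ is correct, the differential at $0$ is indeed $H \mapsto \left(\begin{smallmatrix} 0 & H^{\mathrm{T}} \\ H & 0 \end{smallmatrix}\right)$ with operator norm $\|H\|$, and the resulting comparability $d_{\pi}(V_{0},V_{A}) \asymp \|A\|$ replaces the paper's SVD lemma by a short computation (a minor point worth recording: your formula also shows that $d_{\pi}(V_{0},V) < 1$ forces $V$ to be a graph $V_{A}$ with $\|A\|$ bounded, so small $d_{\pi}$-balls do lie inside the chart). The one step you assert rather than prove is that the chart-pullback of $\gamma_{n,m}$ has a smooth strictly positive density: since $\gamma_{n,m}$ is \emph{defined} in \cite[\S 3.9]{Mat2} as a pushforward of Haar measure on $O(n)$, identifying it with the invariant Riemannian volume requires the uniqueness (up to scalar) of the invariant measure on the compact homogeneous space $O(n)/(O(m)\times O(n-m))$ -- a standard fact, but of exactly the same flavour as the paper's appeal to \cite[Theorem 3.4]{Mat2}, so it should be cited or derived, e.g.\ from the coarea formula applied to the submersion $g \mapsto gV_{0}$. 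In sum: your chart computation is more elementary and avoids exterior algebra and Federer entirely, while the paper's Hausdorff-measure route uses nothing about $\gamma_{n,m}$ beyond invariance; with the uniqueness citation supplied, yours is a complete and arguably simpler proof.
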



A set $E\subset G(n,m)$ is said to be \emph{$\delta$-separated} if $d_{\pi}(V,W)\geq \delta$ for any distinct elements $V,W \in E$.

\begin{definition}[$(\delta,k)$-sets]\label{d:d_k_set} Let $C \subset B(0,1) \subset \R^{n}$ be a finite set. We say that a $\delta$-separated set $C$ is a \emph{$(\delta,k)$-set}, if
\begin{displaymath} \card [B(x,r) \cap C] \lesssim \left(\frac{r}{\delta}\right)^k \end{displaymath}
for every ball $B(x,r) \subset \R^{n}$ with radius $r \geq \delta$.
\end{definition}

The following proposition is a generalization of \cite[Proposition 4.10]{Or} to higher dimensions. Essentially, the result is a discrete version of the Marstrand-Kaufman-Mattila projection theorem.

\begin{lemma}\label{OR} Let $0<\delta<1$ and let $C \subset B(0,1) \subset \R^{n}$ be a $(\delta,m)$-set with $N \in \N$ points. Let $\tau > 0$, and let $E \subset G(n,m)$ be a $\delta$-separated collection of subspaces such that
\begin{displaymath} N(C_{V},\delta) \leq \delta^{\tau}N, \qquad \text{for all } V \in E. \end{displaymath}
Then $\card E \lesssim \delta^{\tau-(n-m)m} \cdot \log(1/\delta)$.
\end{lemma}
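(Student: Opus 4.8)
The plan is a double-counting argument on the number of pairs of points of $C$ whose projections nearly coincide. For $V \in G(n,m)$ set $P(V) := \card\{(x,y) \in C \times C : |\pi_V(x) - \pi_V(y)| \leq 2\delta\}$. I would bound $\sum_{V \in E} P(V)$ from below using the hypothesis on the members of $E$, and from above by reversing the order of summation and estimating, for each fixed pair $(x,y)$, how many subspaces $V \in E$ bring $x$ and $y$ close together. For the lower bound, fix $V \in E$: since $N(C_V, \delta) \leq \delta^{\tau}N =: M$, the set $C_V = \pi_V(C)$ is covered by $M$ balls of radius $\delta$, and distributing the $N$ points of $C$ among these balls and applying Cauchy--Schwarz shows that at least $N^2/M = \delta^{-\tau}N$ ordered pairs land in a common ball (so their projections differ by at most $2\delta$). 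Hence $P(V) \gtrsim \delta^{-\tau}N$ and, summing, $\sum_{V\in E} P(V) \gtrsim \card E \cdot \delta^{-\tau}N$.

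For the upper bound the key geometric input is the sublevel-set estimate that for any fixed unit vector $e$ and any $\rho > 0$,
\begin{displaymath} \gamma_{n,m}(\{V \in G(n,m) : |\pi_V(e)| \leq \rho\}) \lesssim \rho^{m}, \end{displaymath}
which is the same integral-geometric phenomenon underlying \cite[Corollary 3.12]{Mat2} (the submanifold $\{V : V \subset e^{\perp}\}$ has codimension $m$ in $G(n,m)$). Combined with the ball estimate of Proposition \ref{p:ball_est}, and with the observation that $|\pi_V(e) - \pi_W(e)| \leq d_{\pi}(V,W)$ so that the $\delta$-neighbourhood of such a sublevel set is again a comparable sublevel set, this bounds the number of $\delta$-separated subspaces $V$ with $|\pi_V(e)| \leq \rho$ by $\lesssim \rho^{m}\delta^{-m(n-m)}$, valid whenever $\rho \gtrsim \delta$.

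Now I would decompose the pairs dyadically according to $r = |x-y|$. As $C$ is $\delta$-separated and contained in $B(0,1)$, we have $r \in [\delta,2]$, so there are $\asymp \log(1/\delta)$ relevant scales $r \asymp r_j := 2^{j}\delta$, $0 \leq j \leq J$ with $J \asymp \log(1/\delta)$. For a pair at distance $r_j$ the condition $|\pi_V(x-y)| \leq 2\delta$ reads $|\pi_V(e)| \leq 2\delta/r_j \asymp 2^{-j}$ with $e = (x-y)/|x-y|$, so by the previous paragraph the number of $V \in E$ satisfying it is $\lesssim 2^{-jm}\delta^{-m(n-m)}$ (here $\rho \asymp 2^{-j} \gtrsim \delta$). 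On the other hand, the $(\delta,m)$-set property bounds the number of pairs with $|x-y| < r_{j+1}$ by $\lesssim N(r_{j+1}/\delta)^{m} \asymp N 2^{jm}$. Hence the contribution of scale $j$ to $\sum_{V\in E} P(V) = \sum_{(x,y)} \card\{V \in E : |\pi_V(x-y)| \leq 2\delta\}$ is $\lesssim N2^{jm}\cdot 2^{-jm}\delta^{-m(n-m)} = N\delta^{-m(n-m)}$, independent of $j$, and summing over the $\asymp \log(1/\delta)$ scales gives $\sum_{V\in E} P(V) \lesssim N\delta^{-m(n-m)}\log(1/\delta)$. Comparing with the lower bound yields $\card E \cdot \delta^{-\tau}N \lesssim N\delta^{-m(n-m)}\log(1/\delta)$, i.e. $\card E \lesssim \delta^{\tau - (n-m)m}\log(1/\delta)$, as claimed.

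I expect the main obstacle to be the bookkeeping in the upper bound: establishing the separated-plane count $\lesssim \rho^{m}\delta^{-m(n-m)}$ with the correct exponent $m$ and checking that the $\delta$-separation enlargement does not spoil it, together with verifying that the scale-by-scale contributions are genuinely balanced so that only a logarithmic factor — rather than a power of $\delta$ — is lost. This balancing is precisely where the $(\delta,m)$-set hypothesis, with exponent equal to the target projection dimension $m$, is indispensable: it makes the product of the incidence bound $2^{-jm}$ and the pair bound $2^{jm}$ scale-invariant.
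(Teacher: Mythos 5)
Your proposal is correct and is essentially the paper's own proof: the paper encodes your relation $|\pi_V(x)-\pi_V(y)|\lesssim\delta$ via a partition into tubes $\pi_V^{-1}(Q)$ over dyadic cubes $Q\subset V$, gets the lower bound $\gtrsim\delta^{-\tau}N$ per subspace by the same Cauchy--Schwarz pigeonholing, establishes your separated-plane count as its estimate \eqref{eq:est2} from exactly the two inputs you name (the sublevel bound $\gamma_{n,m}(\{V:|\pi_V(e)|\leq\rho\})\lesssim\rho^{m}$ together with Proposition \ref{p:ball_est}), and then sums over the same dyadic scales of $|x-y|$ with the $(\delta,m)$-property balancing each scale. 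The only bookkeeping point you gloss over is the diagonal pairs $x=y$, which your $P(V)$ includes but your decomposition $r\in[\delta,2]$ misses; the paper splits these off as the term $N\cdot\card E$ and absorbs it using $\card E\lesssim\delta^{-(n-m)m}$, which works verbatim in your setup.
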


\begin{proof} Let $\calD_{\delta}$ be a partition of $V$ into $m$-dimensional dyadic cubes.
For a given subspace $V\in E$ we consider the `tube'
\begin{displaymath}
\calT_V:= \{T=\pi_V^{-1}(Q):\; Q\in \calD_{\delta}\},
\end{displaymath}
and we define the relation
\begin{displaymath}
x \sim_V y \quad \Leftrightarrow \quad x,y\in T \in \calT_V.
\end{displaymath}
We define an energy $\calE$ by
\begin{displaymath}
\calE:= \sum_{V \in E}\mathrm{card}\{(x,y)\in C \times C:\; x \sim_V y\}.
\end{displaymath}
Writing
\begin{displaymath}
\calE':= \sum_{V \in E}\mathrm{card}\{(x,y)\in C \times C:\; x \sim_V y,\,x\neq y\},
\end{displaymath}
we find that $\calE = \calE' + N \cdot \mathrm{card}E$, and our goal is to show that $\calE \lesssim \delta^{-(n-m)m}\cdot N \cdot \log \left(\frac{1}{\delta}\right)$. Proposition \ref{p:ball_est} implies that $\mathrm{card}E \lesssim \delta^{-(n-m)m}$ since $E$ is a $\delta$-separated subset of $G(n,m)$. So, it remains to establish the desired upper bound for $\calE'$.

Let us first observe that
\begin{equation}\label{eq:est2}
\mathrm{card}\{V\in E:\; x\sim_V y,\;x\neq y\}\lesssim \frac{\delta^{m(1-(n-m))}}{|x-y|^{m}}.
\end{equation}
Namely, if $V$ is any subspace such that $x \sim_{V} y$, then
\begin{displaymath} B(V,\delta) \subset \{V : |\pi_{V}(x - y)| \leq \beta\delta\} \end{displaymath}
for some constant $\beta$ depending only on $m$ and $n$ (here we also use the inclusion $C \subset B(0,1)$). On the other hand, we have the measure bound
\begin{align*}
\gamma_{n,m}(\{V \in G(n,m) \colon |\pi_V(x-y)|\leq \beta\delta\}) \lesssim \left(\tfrac{\delta}{|x-y|}\right)^{m}.
\end{align*}
Now \eqref{eq:est2} follows, since the set $E$ is $\delta$-separated and, according to Proposition \ref{p:ball_est},  we have $\gamma_{n,m}(B(V,\delta)) \asymp \delta^{m(n - m)}$.

Using \eqref{eq:est2},
\begin{align*}
\calE'&=\sum_{x\in C} \sum_{j:\delta\leq 2^j \leq 1}\sum_{y\in C\atop 2^j \leq |x-y|<2^{j+1}}\mathrm{card}\{V\in E:\;x\sim_V y\}\\
&\lesssim \sum_{x\in C}\sum_{j:\delta\leq 2^j\leq 1}\sum_{y\in C\atop 2^j\leq |x-y|<2^{j+1}}|x-y|^{-m}\delta^{m(1-(n-m))}\\
&\lesssim \sum_{x\in C} \sum_{j:\delta \leq 2^j \leq 1}\mathrm{card}[C\cap B(x,2^{j+1})]\cdot 2^{-jm}\delta^{m(1-(n-m))}\\
&\lesssim \sum_{x\in C}\sum_{j:\delta\leq 2^j \leq 1}\left(\frac{2^{j+1}}{\delta}\right)^{m}2^{-jm}\delta^{m(1-(n-m))}\\
&= \sum_{x\in C}\sum_{j:\delta \leq 2^j \leq 1}\delta^{-(n-m)m} \asymp \delta^{-(n-m)m}\cdot N\cdot \log \left(\frac{1}{\delta}\right).
\end{align*}

The asserted bound  for $\mathrm{card} E$ follows, once we have found an appropriate lower bound for $\calE$. We may assume that $\delta^{\tau}N\geq 1$. The assumption $N(C_V,\delta)\leq \delta^{\tau}N$ guarantees that $C$ can be covered by $K\lesssim \delta^{\tau}N$ tubes $T_1,\ldots,T_K \in \calT_V$, which yields
\begin{align*}
\mathrm{card}\{(x,y)\in C\times C:\; x \sim_V y\}&= \sum_{j=1}^K \mathrm{card}\{(x,y)\in C \times C:\; x,y \in T_j\}\\
&= \sum_{j=1}^K \mathrm{card}[C \cap T_j]^2\\
&\geq \frac{1}{K} \left(\sum_{j=1}^K \mathrm{card}[C\cap T_j]\right)^2\\
& \gtrsim \delta^{-\tau} \cdot N^{-1} \cdot (\mathrm{card}C)^2 = \delta^{-\tau}\cdot N.
\end{align*}
Combing the upper and lower bounds for $\calE$, we find
\begin{displaymath}
\delta^{-\tau}\cdot N \cdot \mathrm{card}E \lesssim \calE \lesssim \delta^{-(n-m)m}\cdot N \cdot \log \left(\tfrac{1}{\delta}\right),
\end{displaymath}
which is the desired result.
\end{proof}

The following reformulation of the lemma will be used later (applied to the Grassmanian $G(n-l,m-l)$ instead of $G(n,m)$).

\begin{cor}\label{ORCor} Let $C \subset B(0,1) \subset \R^{n}$ be a $(\delta,m)$-set with $N \in \N$ points. Then, if $E \subset G(n,m)$ is any $\delta$-separated set with $\card E \geq \delta^{-\beta}$ elements, we have
\begin{displaymath} \frac{1}{\card E} \sum_{V \in E} N(C_{V},\delta) \gtrsim_{\tau} \delta^{(n-m)m - \tau}N, \qquad \tau < \beta. \end{displaymath}
\end{cor}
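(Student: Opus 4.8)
The plan is to obtain Corollary \ref{ORCor} as essentially the contrapositive of Lemma \ref{OR}: the lemma says that only few subspaces $V$ can make $N(C_V,\delta)$ small, so a one-line pigeonhole argument should force the average of $N(C_V,\delta)$ over any sufficiently large $\delta$-separated family $E$ to be large. Accordingly, I would first fix $\tau < \beta$ and set $\tau' := (n-m)m - \tau$, which is precisely the exponent occurring in the desired lower bound. Before invoking Lemma \ref{OR} I would check that $\tau' > 0$: by Proposition \ref{p:ball_est} any $\delta$-separated subset of $G(n,m)$ has at most $\lesssim \delta^{-(n-m)m}$ elements, so the hypothesis $\card E \geq \delta^{-\beta}$ forces $\beta \leq (n-m)m$ (for small $\delta$), whence $\tau < \beta \leq (n-m)m$ and indeed $\tau' > 0$.

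Next I would introduce the ``bad'' subcollection
\[ E_{\mathrm{bad}} := \{V \in E : N(C_V,\delta) \leq \delta^{\tau'}N\}. \]
As a subset of $E$ it is $\delta$-separated, so Lemma \ref{OR} applies with $\tau'$ in place of $\tau$ and gives $\card E_{\mathrm{bad}} \lesssim \delta^{\tau' - (n-m)m}\log(1/\delta) = \delta^{-\tau}\log(1/\delta)$. Comparing with $\card E \geq \delta^{-\beta}$, the polynomial gain $\delta^{-(\beta-\tau)}$ dominates the logarithm, so for $\delta$ small (depending on $\tau$ and $\beta$) one has $\card E_{\mathrm{bad}} \leq \tfrac12\card E$. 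Hence at least half of the $V \in E$ satisfy $N(C_V,\delta) > \delta^{\tau'}N$, and estimating the sum from below by the contribution of these ``good'' subspaces yields
\[ \frac{1}{\card E}\sum_{V \in E} N(C_V,\delta) \geq \frac{\card(E \setminus E_{\mathrm{bad}})}{\card E}\,\delta^{\tau'}N \geq \tfrac12\,\delta^{(n-m)m-\tau}N, \]
which is the claim.

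The two bookkeeping steps — verifying that $E_{\mathrm{bad}}$ meets the hypotheses of Lemma \ref{OR} and extracting the final lower bound — are immediate. The only point that genuinely requires care, and the reason the strict inequality $\tau < \beta$ is assumed, is the comparison of $\delta^{-\tau}\log(1/\delta)$ with $\delta^{-\beta}$: this is where the logarithmic loss in Lemma \ref{OR} is absorbed, and it is what allows the ``for $\delta$ small enough'' qualification (together with its $\tau,\beta$-dependence) to be folded into the implicit constant, accounting for the notation $\gtrsim_{\tau}$ in the statement.
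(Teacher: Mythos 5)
Your proposal is correct and follows essentially the same route as the paper's own proof: both apply Lemma \ref{OR} with the exponent $(n-m)m-\tau$ to bound the number of subspaces $V\in E$ with $N(C_V,\delta)\leq \delta^{(n-m)m-\tau}N$ by $\lesssim \delta^{-\tau}\log(1/\delta)$, and then use $\tau<\beta$ to conclude that these form a negligible proportion of $E$ for small $\delta$, so the remaining subspaces carry the desired average. Your explicit verification via Proposition \ref{p:ball_est} that $(n-m)m-\tau>0$ (so that Lemma \ref{OR} is applicable) is a small point the paper leaves implicit, but otherwise the two arguments are identical.
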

\begin{proof} According to Lemma \ref{OR}, the set $E$ contains at most
\begin{displaymath}
\lesssim \delta^{((n-m)m - \tau) - (n-m)m} \cdot \log (1/\delta) = \delta^{-\tau} \cdot \log (1/\delta)
\end{displaymath}
 subspaces $V$ such that $N(C_{V},\delta) \leq \delta^{(n-m)m - \tau}N$. Since $\tau < \beta$, the proportion of such subspaces in $E$ is close to zero for small $\delta$, and the claim follows. \end{proof}

\subsection{Proof of Theorem \ref{main1_higher} for upper box dimension}

We are now ready to prove Theorem \ref{main1_higher} for upper box dimension. The assumption on the analyticity of the set $B$ will only be required later, in the proof for packing dimension. For the time being, we assume that $B \subset B(0,1) \subset \R^{n}$ is an arbitrary set with
\begin{displaymath} \m_{\B} = \sup\{\overline{\dim}_{\B} B_{\mathbb{V}} : V \in G(n-l,m-l)\} > 0. \end{displaymath}
We will show for $0 \leq \sigma \leq \m_{\B}$ that
\begin{equation}\label{form12_higher} \underline{\dim}_{\MB} \{V \in G(n-l,m-l) : \overline{\dim}_{\B} B_{\mathbb{V}} < \sigma\} \leq \max\{0,(n-m)(m-l) + \sigma - \m_{\B}\}, \end{equation}
where $\underline{\dim}_{\MB}$ denotes the \emph{modified lower box dimension}
\begin{displaymath} \underline{\dim}_{\MB} E := \inf\left\{\sup_{j} \underline{\dim}_{\B} F_{j} : E \subset \bigcup_{j \in \N} F_{j} \right\}. \end{displaymath}
Recall that $G(n-l,m-l)$ is endowed with a metric so that $\Hd G(n-l,m-l)=(n-m)(m-l)$ and the $(n-m)(m-l)$-dimensional Hausdorff measure coincides with $\gamma_{n-l,m-l}$ up to a positive and finite multiplicative constant.
It is clear that sets $E \subset G(n-l,m-l)$ with
\begin{displaymath}
\underline{\dim}_{\MB} E < (n-m)(m-l)
\end{displaymath}
are meager, i.e., countable unions of nowhere dense sets, and have $\gamma_{n-l,m-l}$ measure zero, so \eqref{form12_higher} will imply the upper box dimension part of Theorem \ref{main1_higher}.

As before, let $\calD_{\delta}$ stand for the collection of dyadic cubes in $\R^{n}$ of side-length $\delta > 0$. Write
\begin{displaymath} B^{\delta} := \bigcup\{Q \in \calD_{\delta} : B \cap Q \neq \emptyset\}, \quad \delta > 0, \end{displaymath}
It is easy to check that
\begin{displaymath} N((B^{\delta})_{\mathbb{V}},\delta) \asymp N(B_{\mathbb{V}},\delta) \end{displaymath}
for any $V \in G(n-l,m-l)$ and $\delta > 0$. This shows that $\limsup_{\delta \to 0} \frac{\log N(B_{\mathbb{V}},\delta)}{-\log \delta}= \limsup_{\delta\to 0} \frac{\log N((B^{\delta})_{\mathbb{V}},\delta)}{-\log \delta}$ and thus
\begin{align*} \{V \in G(n-l,m-l) &: \overline{\dim}_{\B} B_{\mathbb{V}} < \sigma\} \\&\subset \bigcup_{i \in \N} \bigcap_{\delta \in (0,1/i)} \{V \in G(n-l,m-l) : N((B^{\delta})_{\mathbb{V}},\delta) \leq \delta^{-\sigma}\}. \end{align*}
Hence, by definition of $\underline{\dim}_{\MB}$, the bound \eqref{form12_higher} would follow from
\begin{equation}\label{form13_higher} \sup_{i} \underline{\dim}_{\B} E_{i} \leq \max\{0,(n-m)(m-l) + \sigma - \m_{\B}\}, \qquad 0 \leq \sigma \leq \m_{\B}, \end{equation}
where $E_{i} = \bigcap_{\delta \in (0,1/i)} \{V \in G(n-l,m-l) : N((B^{\delta})_{\mathbb{V}},\delta) \leq \delta^{-\sigma}\}$. We will now prove \eqref{form13_higher}. Fix $i \in \N$ and write $E := E_{i}$. Given $\sigma < \sigma' < \m_{\B}$, we may find a direction $V_0 \in G(n-l,m-l)$ and a sequence $(\delta_{j})_{j \in \N}$ such that $\delta_{j} \searrow 0$, and $N((B^{\delta_{j}})_{\mathbb{V}_0},\delta_{j}) \geq \delta_{j}^{-\sigma'}$.

\subsubsection{Decomposition into sets essentially in $\R^{n - l}$} Let
\begin{align*}
\calH_{\delta}&:=\{H=\R^{n-l}\times \Pi_{i=1}^l [k_i \delta, (k_i +1)\delta): (k_{1},\ldots,k_{l}) \in \Z^{l}\}
\end{align*}
and set $B^{\delta,H}:= B^{\delta}\cap H$. Thus
\begin{displaymath}
B^{\delta}=\bigcup_{H\in\mathcal{H}_{\delta}} B^{\delta,H}.
\end{displaymath}
In particular,
\begin{equation}\label{form14_higher} N((B^{\delta})_{\mathbb{V}},\delta) \asymp \sum_{H \in \calH_{\delta}} N((B^{\delta,H})_{\mathbb{V}},\delta) \end{equation}
for $V \in G(n-l,m-l)$ and $\delta > 0$. For our purposes, the sets $B^{\delta,H}$ are essentially sets in $\R^{n-l}$ in the following sense: for each $H \in \calH_{\delta}$, there exists a set $P^{\delta,H}\subset \R^{n-l}$ and an $l$-tuple  $(k_1,\ldots,k_l) \in \Z^{l}$ such that $B^{\delta,H} = P^{\delta,H} \times\Pi_{i=1}^l [k_i\delta,(k_i+1)\delta)$. Moreover, the projection properties of the sets $B^{\delta,H}$ and $P^{\delta,H}$ are equivalent in the sense that $N((P^{\delta,H})_{{V}},\delta) \asymp N((B^{\delta,H})_{\mathbb{V}},\delta)$ for $V \in G(n-l,m-l)$ and $\delta > 0$, where $(P^{\delta,H})_{{V}}$ is the orthogonal projection of $P^{\delta,H}$ onto the $(m-l)$-dimensional subspace $V\subset \R^{n-l}$.

\subsubsection{Finding $(\delta,m-l)$-sets}  In order to apply Corollary \ref{ORCor} to $G(n-l,m-l)$, we need to extract some $(\delta,m-l)$-sets. Recall the special direction $V_0 \in G(n-l,m-l)$ with the property that $N((B^{\delta})_{\mathbb{V}_0},\delta_j) \geq \delta_j^{-\sigma'}$ for every $j\in\mathbb{N}$. Fix $j \in \N$ and write $\delta := \delta_{j}$. For $H \in \calH_{\delta}$, we set
\begin{displaymath} M_{H} := N((B^{\delta,H})_{\mathbb{V}_0},\delta) \asymp N((P^{\delta,H})_{V_0},\delta). \end{displaymath}
Then $\sum_{H \in \calH^{\delta}} M_{H} \gtrsim \delta^{-\sigma'}$ according to \eqref{form14_higher}. As before, let $\calT_{V_0}$ be a partition of $\R^{n-l}$ into tubes perpendicular to $V_0$. To be precise, let $\calD_{\delta}$ be a partition of $V_0$ into dyadic cubes and then consider
\begin{displaymath}
\mathcal{T}_{V_0}:=\{T=\pi_{V_0}^{-1}(Q):\; Q\in \calD_{\delta}\}.
\end{displaymath}

Since $N((P^{\delta,H})_{V_0},\delta) \asymp M_{H}$, we may find $K \asymp M_{H}$ tubes $T_{1},\ldots,T_{K} \in \calT_{V_0}$ such that each tube $T_{k}$, $k=1,\ldots,K$, contains a point $x_{k} \in P^{\delta,H}$ and such that the set $C^{\delta,H} := \{x_{1},\ldots,x_{K}\}$ is $\delta$-separated. Moreover, since any ball $B(x,r) \subset \R^{n-l}$ of radius $r \geq \delta$ intersects no more than $\lesssim (r/\delta)^{m-l}$ tubes in $\calT_{V_0}$, we may infer that the set $C^{\delta,H}$ is a $(\delta,m-l)$-set containing $\card C^{\delta,H} \asymp M_{H}$ elements.

\subsubsection{Concluding the proof for upper box dimension} Write $\delta = \delta_{j}$, where $\delta_j$ is as before. We apply Corollary \ref{ORCor} to the sets $C^{\delta,H}$, for every $H \in \calH_{\delta}$. Let $E \subset G(n-l,m-l)$ be any $\delta$-separated set of cardinality $\card E \geq \delta^{-\beta}$, for some $\beta > 0$. Then,
\begin{align*} \frac{1}{\card E} \sum_{V \in E} N((B^{\delta})_{\mathbb{V}},\delta) & \gtrsim \sum_{H \in \calH_{\delta}} \left( \frac{1}{\card E} \sum_{V \in E} N((C^{\delta,H})_{V},\delta) \right)\\
& \gtrsim \delta^{(n-m)(m-l) - \tau} \cdot \sum_{H \in \calH_{\delta}} M_{H} \gtrsim \delta^{(n-m)(m-l) - \tau - \sigma'}, \quad \tau < \beta. \end{align*}

What does this mean? Recall that $\sigma < \sigma' < \m_{\B}$. If
\begin{displaymath}
\beta > \max\{0,(n-m)(m-l) + \sigma - \sigma'\},
\end{displaymath}
 we may apply the previous estimate with some
\begin{displaymath}
\tau > \max\{0,(n-m)(m-l)+ \sigma - \sigma'\}
\end{displaymath}
 to obtain the inequality
\begin{displaymath} \frac{1}{\card E} \sum_{V \in E} N((B^{\delta})_{\mathbb{V}},\delta) > \delta^{-\sigma}, \end{displaymath}
at least for $\delta = \delta_{j}$ small enough.

This implies that
\begin{displaymath} E \not\subset \{V : N((B^{\delta})_{\mathbb{V}},\delta) \leq \delta^{-\sigma}\}. \end{displaymath}
Thus, for small enough $\delta = \delta_{j}$, the maximum cardinality of a $\delta$-separated subset of $\{V : N((B^{\delta})_{\mathbb{V}},\delta) \leq \delta^{-\sigma}\}$ is less than $\delta^{-\beta}$, for any
\begin{displaymath}
\beta > \max\{0,(n-m)(m-l)+ \sigma - \sigma'\}.
\end{displaymath}
 Since $\sigma'< \m_{\B}$ was arbitrary, this yields \eqref{form13_higher} and completes the proof of Theorem \ref{main1_higher} for upper box dimension.

\subsection{Proof of Theorem \ref{main1_higher} for packing dimension}

Let $B \subset \R^{n}$ be a bounded analytic set, and assume that
\begin{displaymath} \m_{\p} := \sup\{\dim_{\p} B_{\mathbb{V}} : V \in G(n-l,m-l)\} > 0. \end{displaymath}
As in the case of upper box dimension, it suffices to prove for $0 \leq \sigma \leq \m_{\p}$ that
\begin{equation}\label{form15_higher} \underline{\dim}_{\MB} \{V \in G(n-l,m-l) : \dim_{\p} B_{\mathbb{V}} < \sigma\} \leq \max\{0,(n-m)(m-l) + \sigma - \m_{\p}\}. \end{equation}
Suppose that \eqref{form15_higher} fails. Then, we may find numbers $0 < \sigma < \sigma' < \m_{\p}$ such that
\begin{equation}\label{form17} \underline{\dim}_{\MB} \{V \in G(n-l,m-l) : \dim_{\p} B_{\mathbb{V}} < \sigma\} > \max\{0,(n-m)(m-l) + \sigma - \sigma'\}. \end{equation}
Choose $V_0 \in G(n-l,m-l)$ such that $\dim_{\p} B_{\mathbb{V}_0} > \sigma'$. Since $B_{\mathbb{V}_0} \subset \R^{m}$ is analytic, a result of Joyce and Preiss \cite{JP} permits us to find a compact set $K_{\mathbb{V}_0} \subset B_{\mathbb{V}_0}$ with positive and finite $\sigma'$-dimensional packing measure; $0 < \mathcal{P}^{\sigma'}(K_{\mathbb{V}_0}) < \infty$. Next, we apply the `pull-back lemma' by Lubin to find a Borel measure $\mu$ supported on $B$, with the property that
\begin{equation}\label{form16} \pi_{{\mathbb{V}_0}\sharp}\mu = \mathcal{P}^{\sigma'}\llcorner K_{\mathbb{V}_0}. \end{equation}
Now $B^{0} := \spt \mu \subset B$ is a $\mu$-measurable set with $\mu(B^{0}) > 0$, and \eqref{form17} holds, by monotonicity, with $B$ replaced by $B^{0}$. We quote a lemma from \cite{Or}.
\begin{lemma}[Adapted from Lemma 4.5 in \cite{Or}]\label{OrLemma} Let $\mu$ be a Borel regular measure on $\R^{n}$, and let $\beta,\sigma > 0$. Assume that $B^0 \subset \R^{n}$ is $\mu$-measurable with $0 < \mu(B^0) < \infty$, and
\begin{displaymath} \underline{\dim}_{\MB} \{V \in G(n-l,m-l) : \dim_{\p} B^0_{\mathbb{V}} < \sigma\} > \beta. \end{displaymath}
Then, there exists a $\mu$-measurable set $B' \subset B^0$ with $\mu(B') > 0$ such that
\begin{displaymath} \underline{\dim}_{\MB} \{V\in G(n-l,m-l) : \overline{\dim}_{\B} B'_{\mathbb{V}} < \sigma\} > \beta. \end{displaymath}
\end{lemma}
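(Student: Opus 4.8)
The plan is to exploit that the modified lower box dimension $\underline{\dim}_{\MB}$ is countably stable, i.e. $\underline{\dim}_{\MB}\bigcup_i A_i = \sup_i \underline{\dim}_{\MB} A_i$. Writing $G := \{V \in G(n-l,m-l) : \dim_{\p} B^0_{\mathbb V} < \sigma\}$, so that $\underline{\dim}_{\MB} G > \beta$ by hypothesis, it therefore suffices to produce a single \emph{countable} family $\{B'_i\}_{i\in\N}$ of $\mu$-measurable subsets of $B^0$, each with $\mu(B'_i) > 0$, such that $G \subseteq \bigcup_i S_i$, where $S_i := \{V : \overline{\dim}_{\B}(B'_i)_{\mathbb V} < \sigma\}$. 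Indeed, granting this, countable stability and monotonicity give $\beta < \underline{\dim}_{\MB} G \leq \sup_i \underline{\dim}_{\MB} S_i$, so some $S_{i_0}$ has $\underline{\dim}_{\MB} S_{i_0} > \beta$, and $B' := B'_{i_0}$ is the desired set.

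To build the family I would pass to the finite measure $\mu_0 := \mu \llcorner B^0$ and, for each $V$, to its projection $\nu_V := \pi_{\mathbb V \sharp} \mu_0$. Two elementary facts about supports drive the argument: first, $\spt \nu_V = \overline{\pi_{\mathbb V}(\spt \mu_0)}$, and in particular $\pi_{\mathbb V}(\spt \mu_0) \subseteq \spt \nu_V$ (a genuine inclusion, since any open neighbourhood of $\pi_{\mathbb V}(x)$, $x \in \spt\mu_0$, pulls back to an open neighbourhood of $x$ of positive $\mu_0$-mass); second, $\spt \nu_V \subseteq \overline{B^0_{\mathbb V}}$, so that $\dim_{\p} \spt \nu_V \leq \dim_{\p} B^0_{\mathbb V} < \sigma$ for $V \in G$, using monotonicity and closure-invariance of packing dimension. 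Now fix $V \in G$. By the definition of packing dimension and the closure-invariance of upper box dimension, we may write $\spt \nu_V = \bigcup_j C_j$ with each $C_j$ closed in $\spt \nu_V$ and $\overline{\dim}_{\B} C_j < \sigma$. Since $\spt \nu_V$ is a complete metric space, Baire's theorem yields an index $j_0$ and an open set $U \subset \R^m$ with $\emptyset \neq \spt \nu_V \cap U \subseteq C_{j_0}$; shrinking $U$, there is a ball $\overline{B(y,\rho)} \subseteq U$ meeting $\spt \nu_V$, and on it $\spt \nu_V$ is locally box-small: $\overline{\dim}_{\B}(\spt \nu_V \cap \overline{B(y,\rho)}) < \sigma$.

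The heart of the matter is to decouple this (direction-dependent) localisation from the projection $\pi_{\mathbb V}$, so that the sets $B'_i$ can be chosen from a \emph{fixed} countable list. Here I would use that projections are $1$-Lipschitz together with a countable dense set $D \subseteq \spt \mu_0$. Because $\overline{B(y,\rho)}$ meets $\spt \nu_V = \overline{\pi_{\mathbb V}(\spt \mu_0)}$, there is $x_\ast \in \spt \mu_0$ with $\pi_{\mathbb V}(x_\ast) \in B(y,\rho)$; choosing a rational $r > 0$ with $\overline{B(\pi_{\mathbb V}(x_\ast),2r)} \subseteq \overline{B(y,\rho)}$ and then $x_0 \in D$ with $|x_0 - x_\ast| < r$, the Lipschitz bound gives $\pi_{\mathbb V}(\overline{B(x_0,r)}) \subseteq \overline{B(\pi_{\mathbb V}(x_0),r)} \subseteq \overline{B(y,\rho)}$. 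Setting $B'_{x_0,r} := \spt \mu_0 \cap B^0 \cap \overline{B(x_0,r)}$ --- a member of the countable family indexed by $(x_0,r) \in D \times (\Q \cap (0,\infty))$ --- we obtain $\mu(B'_{x_0,r}) \geq \mu_0(B(x_0,r)) > 0$ since $x_0 \in \spt \mu_0$, while the containment $\pi_{\mathbb V}(\spt \mu_0) \subseteq \spt \nu_V$ forces, genuinely and with no null-set loss, $\pi_{\mathbb V}(B'_{x_0,r}) \subseteq \spt \nu_V \cap \overline{B(y,\rho)}$. Hence $\overline{\dim}_{\B}(B'_{x_0,r})_{\mathbb V} < \sigma$, i.e. $V \in S_{x_0,r}$. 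As $V \in G$ was arbitrary, the countable family $\{B'_{x_0,r}\}$ covers $G$ in the required sense, and the proof concludes as in the first paragraph.

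The step I expect to be most delicate is exactly this passage to a fixed countable family. Upper box dimension is a purely fine-scale quantity and is destroyed by arbitrary $\mu$-null modifications, so the naive idea of approximating the direction-dependent ``good'' sets in the measure algebra fails: the projection of a null error set can acquire full box dimension. The two features that rescue the argument --- and which must be invoked with care --- are the genuine (not merely $\mu$-almost-everywhere) inclusion $\pi_{\mathbb V}(\spt \mu_0) \subseteq \spt \nu_V$, which keeps all relevant inclusions exact, and the Lipschitz decoupling, which replaces the continuum of pullbacks $\pi_{\mathbb V}^{-1}(\overline{B(y,\rho)})$ by balls drawn from a single countable net in the source space.
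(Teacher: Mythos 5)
There is a genuine gap, and it sits one step before the place you flagged as delicate. In your second paragraph you deduce $\dim_{\p} \spt \nu_V \leq \dim_{\p} B^0_{\mathbb{V}}$ from the inclusion $\spt \nu_V \subseteq \overline{B^0_{\mathbb{V}}}$ by invoking ``closure-invariance of packing dimension''. Packing dimension is \emph{not} closure-invariant --- only upper box dimension is: a countable dense subset of $[0,1]$ has packing dimension $0$ while its closure has packing dimension $1$. The failure is not hypothetical in your setting. Take $\mu = \sum_{i} 2^{-i}\delta_{x_i}$ with $\{x_i\}$ dense in $B(0,1)$ and $B^0 = \{x_i : i \in \N\}$; then $\dim_{\p} B^0_{\mathbb{V}} = 0 < \sigma$ for every $V$, so your set $G$ is all of $G(n-l,m-l)$, yet $\spt \mu_0 = \overline{B(0,1)}$ and $\spt \nu_V = \pi_{\mathbb{V}}(\overline{B(0,1)})$ has packing dimension $m$. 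For small $\sigma$ your Baire decomposition of $\spt \nu_V$ into closed pieces of upper box dimension $< \sigma$ cannot even begin. The root cause is that $\pi_{\mathbb{V}}(\spt \mu_0)$ is contained in $\overline{B^0_{\mathbb{V}}}$ but in general not in $B^0_{\mathbb{V}}$, so the closure in your first ``elementary fact'' cannot be discarded.

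The repair is exactly the step of the paper's argument (via Lemma 4.5 of \cite{Or}) that your route tries to bypass, and it is the one place where Borel regularity of $\mu$ is used --- the paper says so explicitly: first choose a \emph{compact} set $K \subseteq B^0 \cap \spt(\mu \llcorner B^0)$ with $\mu(K) > 0$ (inner regularity of the finite Borel regular measure $\mu \llcorner B^0$), and run your entire argument with $\mu_K := \mu \llcorner K$ and $\nu_V := \pi_{\mathbb{V}\sharp}\mu_K$ in place of $\mu_0$ and $\pi_{\mathbb{V}\sharp}\mu_0$. Since $K$ is compact, $\pi_{\mathbb{V}}(K)$ is compact, hence closed, so $\spt \nu_V \subseteq \pi_{\mathbb{V}}(K) \subseteq B^0_{\mathbb{V}}$, and plain monotonicity gives $\dim_{\p} \spt \nu_V < \sigma$ for $V \in G$ --- no closure-invariance needed. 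With that substitution, everything else in your proposal is sound and essentially coincides with the imported proof: the reduction via countable stability of $\underline{\dim}_{\MB}$, the Baire category argument on the complete set $\spt \nu_V$ (which also guarantees that the localized pullbacks carry positive measure, since nonempty relatively open subsets of $\spt \nu_V$ have positive $\nu_V$-mass), and the decoupling through a countable dense set $D \subseteq \spt \mu_K$ with rational radii, which correctly keeps all inclusions genuine rather than modulo null sets. Your instinct about where the danger lies was right; the lesson is that it is the compact exhaustion, not the support of $\mu_0$ alone, that converts the packing-dimension hypothesis on the (non-closed) sets $B^0_{\mathbb{V}}$ into usable information about the closed sets $\spt \nu_V$.
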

The corresponding lemma in \cite{Or} only concerns projections of planar sets, but the proof works verbatim in the situation above. We intend to apply the lemma to the measure $\mu$ constructed above and the set $B^{0} = \spt \mu$. Strictly speaking, the abstract `pull-back lemma' from \cite{Lu} does not tell us that the measure $\mu$ is Borel \textbf{regular}. However, inspecting the proof of Lemma 4.5 in \cite{Or}, the regularity of the measure is only used to guarantee the existence of compact sets $K \subset B^0 \cap \spt \mu$ with positive $\mu$-measure. Fortunately, the existence of such sets is clear in our situation, since here  $ B^{0}=\spt \mu$ is closed to begin with.

Applying Lemma \ref{OrLemma} to the measure $\mu$ constructed above, we find a set $B'$ in $B^{0}$ such that $\mu(B') > 0$, and
\begin{equation}\label{form18_higher} \underline{\dim}_{\MB} \{V \in G(n-l,m-l) : \overline{\dim}_{\B} B'_{\mathbb{V}} < \sigma\} > \max\{0,(n-m)(m-l) + \sigma - \sigma'\}. \end{equation}
However, we may infer from \eqref{form16} that
\begin{displaymath} \mathcal{P}^{\sigma'}(B'_{\mathbb{V}_0}) = \mu(\pi_{\mathbb{V}_0}^{-1}(B'_{\mathbb{V}_0})) \geq \mu(B') > 0, \end{displaymath}
and, in particular,
\begin{displaymath} \m_{\B}' := \sup\{\overline{\dim}_{\B} B'_{\mathbb{V}} : V \in G(n-l,m-l)\} \geq \overline{\dim}_{\B} B'_{\mathbb{V}_0} \geq \sigma'. \end{displaymath}
Now it follows from the upper box dimension part of the proof, namely the estimate \eqref{form12_higher}, that
\begin{align*} \underline{\dim}_{\MB} \{V \in G(n-l,m-l) : \overline{\dim}_{\B} B'_{\mathbb{V}} < \sigma\} &\leq \max\{0,(n-m)(m-l) + \sigma - \m_{\B}'\}\\& \leq \max\{0,(n-m)(m-l) + \sigma - \sigma'\}.
\end{align*}
This contradicts \eqref{form18_higher} and concludes the proof of Theorem \ref{main1_higher} for bounded sets, and, according to Remark \ref{bounded}, for all sets.

\appendix

\section{Volumes of balls on the Grassmannian}\label{volumes}

In this final section, we prove Proposition \ref{p:ball_est}. We start with a geometric lemma.

\begin{lemma}\label{bases} Let $V,W \in G(n,m)$. Then there exist orthonormal bases $\{v_{1},\ldots,v_{m}\} \subset V$ and $\{w_{1},\ldots,w_{m}\} \subset W$ such that
\begin{displaymath} |v_{i} - w_{i}| \lesssim \|\pi_{V} - \pi_{W}\|, \qquad 1 \leq i \leq m. \end{displaymath}
\end{lemma}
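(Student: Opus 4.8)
The plan is to reduce the statement to the classical theory of principal angles between the subspaces $V$ and $W$. First I would fix orthonormal bases of $V$ and $W$, arranged as the columns of two $n \times m$ matrices $A$ and $B$, and consider the $m \times m$ matrix $A^{\mathrm{T}}B$. Since $A$ and $B$ have orthonormal columns, all singular values of $A^{\mathrm{T}}B$ lie in $[0,1]$, so its singular value decomposition can be written $A^{\mathrm{T}}B = P\Sigma Q^{\mathrm{T}}$ with $P,Q \in O(m)$ and $\Sigma = \operatorname{diag}(\sigma_1,\ldots,\sigma_m)$, $1 \geq \sigma_1 \geq \cdots \geq \sigma_m \geq 0$. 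I would then define the candidate bases to be the columns of $AP$ and $BQ$; these are again orthonormal bases of $V$ and $W$, and by construction the mixed Gram matrix is diagonalized. Writing $v_i$ and $w_i$ for the respective columns, one obtains $\langle v_i, w_j \rangle = \sigma_i \delta_{ij}$, and setting $\sigma_i = \cos\theta_i$ with $\theta_i \in [0,\pi/2]$ identifies the $\theta_i$ as the principal angles and the $v_i, w_i$ as the associated principal vectors.

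With these bases the left-hand side is immediate: since $|v_i| = |w_i| = 1$ and $\langle v_i, w_i \rangle = \cos\theta_i$, we have $|v_i - w_i|^2 = 2(1 - \cos\theta_i)$. Because $\theta_i \leq \theta_m$ for every $i$, this yields the uniform bound $|v_i - w_i|^2 \leq 2(1 - \cos\theta_m) = 4\sin^2(\theta_m/2)$, so that $|v_i - w_i| \leq 2\sin(\theta_m/2)$ for all $1 \leq i \leq m$.

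It remains to bound $\sin(\theta_m/2)$ by $\|\pi_V - \pi_W\|$. The key step is the lower estimate $\|\pi_V - \pi_W\| \geq \sin\theta_m$, which I would obtain by testing the operator $\pi_V - \pi_W$ on the single unit vector $v_m$. The diagonal relation $\langle v_m, w_j\rangle = \cos\theta_m \delta_{mj}$ gives $\pi_W(v_m) = \cos\theta_m\, w_m$, while $\pi_V(v_m) = v_m$; hence $(\pi_V - \pi_W)(v_m) = v_m - \cos\theta_m\, w_m$, whose squared norm computes to $1 - \cos^2\theta_m = \sin^2\theta_m$. Thus $\|\pi_V - \pi_W\| \geq \sin\theta_m$ (in fact equality holds, but only the lower bound is needed). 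Finally, since $\theta_m/2 \in [0,\pi/4]$ we have $\cos(\theta_m/2) \geq 1/\sqrt2$, so $\sin\theta_m = 2\sin(\theta_m/2)\cos(\theta_m/2) \geq \sqrt2\,\sin(\theta_m/2)$; combining the displayed bounds gives $|v_i - w_i| \leq 2\sin(\theta_m/2) \leq \sqrt2\,\sin\theta_m \leq \sqrt2\,\|\pi_V - \pi_W\|$, which is the asserted inequality.

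The only genuinely delicate point is the simultaneous diagonalization of the mixed Gram matrix, that is, the existence of the principal vectors; everything after that is a short computation. I would present this either by invoking the singular value decomposition as above or, if a self-contained argument is preferred, by an inductive variational construction of the principal vectors. The operator-norm lower bound and the elementary trigonometric inequality are the routine remaining ingredients.
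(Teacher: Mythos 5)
Your proof is correct, and its core is the same as the paper's: both arguments form the mixed Gram matrix $Q_{V}^{\mathrm{T}}Q_{W}$ of two arbitrary orthonormal bases, take its singular value decomposition, and rotate each basis by the corresponding orthogonal factor so that $v_{i} \cdot w_{j} = \sigma_{i}\delta_{ij}$; your identification $\sigma_{i} = \cos\theta_{i}$ with the principal angles is only a change of language. Where you genuinely diverge is in how the estimate is closed, and there your version is tighter. The paper bounds the singular values from below, $\sigma_{j} \geq 1 - \epsilon$ with $\epsilon = \|\pi_{V} - \pi_{W}\|$, and then invokes ``simple trigonometry''; but that displayed bound by itself yields only $|v_{i} - w_{i}|^{2} = 2(1 - \sigma_{i}) \leq 2\epsilon$, i.e.\ a bound of order $\sqrt{\epsilon}$, so the linear estimate asserted in the lemma still requires exploiting the off-diagonal orthogonality $v_{i} \perp w_{j}$ for $i \neq j$. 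Your computation supplies precisely this missing step: from $\pi_{W}(v_{m}) = \cos\theta_{m}\,w_{m}$ you get the exact identity $|(\pi_{V} - \pi_{W})(v_{m})| = \sin\theta_{m}$, hence $\sin\theta_{m} \leq \epsilon$, which is linear in $\epsilon$, and the elementary inequality $2\sin(\theta/2) \leq \sqrt{2}\,\sin\theta$ for $\theta \in [0,\pi/2]$ then gives $|v_{i} - w_{i}| \leq \sqrt{2}\,\epsilon$ with an explicit constant. Two further small gains of your route: you never need the paper's nonsingularity discussion (the case $\epsilon < 1$ and $\det M \neq 0$), since the SVD with orthogonal factors and the bound $\sigma_{i} \in [0,1]$ hold for an arbitrary $m \times m$ matrix of the form $A^{\mathrm{T}}B$ with orthonormal columns in $A,B$; and your constant is explicit where the paper's is implicit. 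In short: same decomposition, but your concluding estimate is more complete at the one delicate point of the argument.
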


\begin{proof} Write $\epsilon := \|\pi_{V} - \pi_{W}\|$. Choose some orthonormal bases for $V$ and $W$, and form the $(n \times m)$-matrices $Q_{V}$ and $Q_{W}$ with the basis vectors as columns. Then $Q_{V}^{\mathrm{T}} = \pi_{V}$, and $Q_{V}$ maps $\R^{m}$ isometrically onto $V$, as follows from
\begin{displaymath} |x| = |\operatorname{Id}x| = |Q_{V}^{\mathrm{T}}Q_{V}x| \leq |Q_{V}x| \leq |x|, \qquad x \in \R^{m}. \end{displaymath}
Similar statements hold for $Q_{W}$. Consider the $(m \times m)$-matrix $M := Q_{V}^{\mathrm{T}}Q_{W}$. If $\epsilon < 1$, as we may assume, $M$ is nonsingular; otherwise one finds a unit vector $x \in \operatorname{ker} M$, and then $|\pi_{V}(Q_{W})x - \pi_{W}(Q_{W})x| = 1 > \epsilon$.  We perform the singular value decomposition (SVD) for $M$:
\begin{displaymath} M = O_{1}\Sigma O_{2}^{\mathrm{T}}. \end{displaymath}
Here $O_{1},O_{2} \in O(m)$, since $\det M \neq 0$, and $\Sigma$ is a diagonal $(m \times m)$-matrix with non-negative entries, namely the \emph{singular values} of $M$. We first aim to bound the singular values from below. Let $x \in \R^{m}$ be an arbitrary unit vector. Since $\|\pi_{V} - \pi_{W}\| = \epsilon$, we have
\begin{displaymath} |Mx| = |\pi_{V}(Q_{W}x)| \geq |\pi_{W}(Q_{W}x)| - |\pi_{V}(Q_{W}x) - \pi_{W}(Q_{W}x)| \geq 1 - \epsilon, \end{displaymath}
using the fact that $Q_{W}x$ is a unit vector on $W$. Now, fix $1 \leq j \leq m$ and choose the unit vector $x \in \R^{m}$ so that $O_{2}^{\mathrm{T}}x$ equals the $j^{{th}}$ standard basis vector $e_{j}$. Then
\begin{displaymath} 1 - \epsilon \leq |Mx| = |O_{1}\Sigma O_{2}^{\mathrm{T}}x| = |\sigma_{j}O_{1}e_{j}| = \sigma_{j}, \end{displaymath}
where $\sigma_{j}$ is the $j^{th}$ diagonal element in $\Sigma$ -- the $j^{th}$ singular value. In conclusion, all the singular values $\sigma_{j}$ satisfy $\sigma_{j} \geq 1 - \epsilon$. Now we are prepared to construct the bases. The SVD implies that
\begin{displaymath} [Q_{V}O_{1}]^{\mathrm{T}}[Q_{W}O_{2}] = \Sigma. \end{displaymath}
We simply observe that the columns of the $(n \times m)$-matrices $Q_{V}O_{1}$ and $Q_{W}O_{2}$ form orthonormal bases $\{v_{1},\ldots,v_{m}\}$ and $\{w_{1},\ldots,w_{m}\}$ for the subspaces $V$ and $W$, respectively. Moreover, the inner product of any pair $(v_{i},w_{j})$ satisfies
\begin{displaymath} v_{i} \cdot w_{j} = \sigma_{j}\delta_{ij} \geq (1 - \epsilon)\delta_{ij}. \end{displaymath}
This means that the angles between the vectors $v_{i}$ and $w_{i}$, $1 \leq i \leq m$, are $\lesssim \epsilon$, and the rest follows by simple trigonometry.
\end{proof}

The measure $\gamma_{n,m}$ is $O(n)$-invariant, as follows immediately from the construction, see \cite[\S3.9]{Mat2}. By $O(n)$-invariance, we of course mean that
\begin{displaymath} \gamma_{n,m}(B(V,\delta)) = \gamma_{n,m}(B(OV,\delta)) \end{displaymath}
for any $m$-plane $V \in G(n,m)$, any transformation $O \in O(n)$, and any $\delta > 0$. Since for any pair of $m$-planes $V,W \in G(n,m)$ we may find $O \in O(n)$ with $OV = W$, this allows us to make the following reduction: \textbf{in order to prove Proposition \ref{p:ball_est}, it suffices to find an $m$-plane $V \in G(n,m)$ with}
\begin{equation}\label{form1} 0 < \liminf_{\delta \to 0} \frac{\gamma_{n,m}(B(V,\delta))}{\delta^{m(n - m)}} \leq \limsup_{\delta \to 0} \frac{\gamma_{n,m}(B(V,\delta))}{\delta^{m(n - m)}} < \infty. \end{equation}

\begin{proof}[Proof of \eqref{form1}] Unfortunately, we are not able to prove \eqref{form1} for the measure $\gamma_{n,m}$ directly. Instead, the strategy will be roughly to (i) interpret the Grassmannian $G(n,m)$ as an $m(n - m)$-dimensional smooth submanifold of some Euclidean space, (ii) conclude that the natural Hausdorff measure on the submanifold satisfies a condition analogous to \eqref{form1}, and finally (iii) show that the measure $\gamma_{n,m}$ is equivalent to the said Hausdorff measure.

Step (i) involves the space $\bigwedge_{m} \R^{n}$ of all $m$-vectors over $\R^{n}$. For an introduction to the space $\bigwedge_{m} \R^{n}$, see \cite[Part I, Chapter I]{Wh}. We will mainly need to know that $\bigwedge_{m} \R^{n}$ is an $\binom{n}{m}$-dimensional vector space and can be endowed with a natural inner product, see \cite[\S1.1.12]{Wh}; we denote by $\| \cdot \|_{m}$ the norm induced by this inner product. The vectors in $\bigwedge_{m} \R^{n}$ can be expressed as linear combinations of \emph{simple $m$-vectors} of the form $v_{1} \wedge \cdots \wedge v_{m}$ where $v_{1},\ldots,v_{m} \in \R^{m}$ and $\wedge$ is the wedge product. The subset
\begin{displaymath} G := \{\mathbf{w} : \mathbf{w} \text{ is a simple $m$-vector, and } \|\mathbf{w}\|_{m} = 1\} \end{displaymath}
is a compact smooth $m(n - m)$-dimensional submanifold of $\bigwedge_{m} \R^{n}$, as shown in \cite[\S3.2.28]{Fe}. In particular, if we consider the $m(n - m)$-dimensional Hausdorff measure $\calH^{m(n - m)}$ living on $G \subset \bigwedge_{m} \R^{n}$ -- defined using the norm $\| \cdot \|_{m}$ -- we may conclude that there exists a simple $m$-vector $\mathbf{w}_{0} \in G$ such that
\begin{equation}\label{form2} \lim_{\delta \to 0} \frac{\calH^{m(n - m)}(B(\mathbf{w}_{0},\delta))}{\delta^{m(n - m)}} = \kappa > 0. \end{equation}

Steps (i) and (ii) are now behind us; it only remains to relate $G$ to $G(n,m)$. Consider a pair of vectors $\{-\mathbf{v},\mathbf{v}\} \subset G$. Since $\mathbf{v}$ is simple and $\mathbf{v} \neq 0$, we know that $\mathbf{v} = v_{1} \wedge \cdots \wedge v_{m}$ for some linearly independent vectors $v_{1},\ldots,v_{m} \in \R^{n}$. Hence, the set $\{v_{1},\ldots,v_{m}\}$ spans a subspace $V \in G(n,m)$. We now consider the mapping $T \colon G \to G(n,m)$, defined by $T(\{-\mathbf{v},\mathbf{v}\}) = V$. Our first claims are that \textbf{$T$ is $2$-to-$1$ and surjective}. Let $V \in G(n,m)$, and consider the subspace $L_{V}$ of $\bigwedge_{m} \R^{n}$ spanned by the simple $m$-vectors $v_{1} \wedge \cdots \wedge v_{m}$ with $v_{j} \in V$ for $1 \leq j \leq m$. Since $L_{V} \cong \wedge_{m} \R^{m}$, we infer that $\Hd L_{V} = \binom{m}{m} = 1$. So, $L_{V}$ is a one-dimensional subspace of $\bigwedge_{m} \R^{n}$, and, in particular, $G \cap L_{V} = \{-\mathbf{v},\mathbf{v}\}$ for some vector $\mathbf{v} \in G$. In other words,
\begin{displaymath} T^{-1}(V) = \{-\mathbf{v},\mathbf{v}\}, \end{displaymath}
just as we wanted. This observation allows us to push forward the metric from $G$ to $G(n,m)$ by setting
\begin{equation}\label{form3} d(V,W) := \dist(T^{-1}(V),T^{-1}(W)) = \min\{\|\mathbf{v} - \mathbf{w}\|_{m}, \|\mathbf{v} + \mathbf{w}\|_{m}\}, \end{equation}
provided that $T\mathbf{v} = V$ and $T\mathbf{w} = W$. Of course, $\dist$ refers to the distance with respect to $\| \cdot \|_{m}$. Verifying the triangle inequality for $d$ is an easy case chase using the right hand side of \eqref{form3}. The upshot is that we may now use $d$ to define an $m(n - m)$-dimensional Hausdorff measure $\calH^{m(n - m)}_{d}$ on $G(n,m)$. We now relate $\calH^{m(n - m)}$-densities on $G$ to $\calH^{m(n - m)}_{d}$-densities on $G(n,m)$. In fact, we have
\begin{displaymath} \limsup_{\delta \to 0} \frac{\calH^{m(n - m)}(B(\mathbf{v},\delta))}{\delta^{m(n - m)}} = \limsup_{\delta \to 0} \frac{\calH_{d}^{m(n - m)}(B_{d}(T\mathbf{v},\delta))}{\delta^{m(n - m)}}, \end{displaymath}
and the same equation holds with $\limsup$ replaced by $\liminf$. The proof is simple: given $\mathbf{v}_{0} \in G$, we can find a $\|\cdot\|_{m}$-neighbourhood $U$ of $\mathbf{v}_{0}$ in $G$ so small that
\begin{displaymath} \dist(\{-\mathbf{v},\mathbf{v}\}, \{-\mathbf{w},\mathbf{w}\}) = \|\mathbf{v} - \mathbf{w}\|_{m} \end{displaymath}
for all vectors $\mathbf{v},\mathbf{w} \in U$. Then, the restriction $T|U \colon U \to (T(U),d)$ is an isometry, and
\begin{displaymath} \calH^{m(n - m)}(B(\mathbf{v}_{0},\delta)) = \calH^{m(n - m)}_{d}(T[B(\mathbf{v}_{0},\delta)]) = \calH^{m(n - m)}_{d}(B_{d}(T\mathbf{v}_{0},\delta)) \end{displaymath}
for small enough $\delta > 0$. Recalling \eqref{form2}, we have now proven that
\begin{equation}\label{form4} \lim_{\delta \to 0} \frac{\calH^{m(n - m)}_{d}(B_{d}(W_{0},\delta))}{\delta^{m(n - m)}} = \kappa > 0 \end{equation}
with $W_{0} = T\mathbf{w}_{0}$.

We next need to relate $\calH^{m(n - m)}_{d}$ to $\gamma_{n,m}$. We first consider another Hausdorff measure on $G(n,m)$, namely $\calH^{m(n - m)}_{\pi}$. The letter $\pi$ refers to the projection metric $d_{\pi}(V,W) = \|\pi_{V} - \pi_{W}\|$ on $G(n,m)$. Our aim is to prove that the measures $\calH_{d}^{m(n - m)}$ and $\calH^{m(n - m)}_{\pi}$ are equivalent. To this end, it suffices to demonstrate the bilipschitz-equivalence of the metrics $d$ and $d_{\pi}$:
\begin{equation}\label{form5} cd(V,W) \leq d_{\pi}(V,W) \leq Cd(V,W), \qquad V,W \in G(n,m), \end{equation}
for some positive and finite constants $c$ and $C$. To prove the rightmost inequality, we use the second estimate in \cite[\S1.1.15(7)]{Wh}, namely that if $V,W \in G(n,m)$, and $\mathbf{v},\mathbf{w} \in G$ are $m$-vectors with $T\mathbf{v} = V$ and $T\mathbf{w} = W$, then
\begin{displaymath} |v - \pi_{W}v| \leq \|\mathbf{v} - \mathbf{w}\|_{m} \end{displaymath}
for all unit vectors $v \in V$. Since also $T(-\mathbf{w}) = W$, it follows that
\begin{displaymath} d_{\pi}(V,W) \asymp \sup_{|v| = 1} |v - \pi_{W}v| \leq \min\{\|\mathbf{v} - \mathbf{w}\|_{m},\|\mathbf{v} + \mathbf{w}\|_{m}\} = d(V,W). \end{displaymath}

To prove the leftmost inequality in \eqref{form5}, we fix $V,W \in G(n,m)$ and use Lemma \ref{bases} to find such orthonormal bases $\{v_{1},\ldots,v_{m}\}$ and $\{w_{1},\ldots,w_{m}\}$ for $V$ and $W$ such that $|v_{i} - w_{i}| \lesssim d_{\pi}(V,W)$ for $1 \leq i \leq m$. Then, we use inequality \cite[\S1.12.17]{Wh} to conclude that
\begin{displaymath} d(V,W) \leq \|v_{1} \wedge \cdots \wedge v_{m} - w_{1} \wedge \cdots \wedge w_{m}\|_{m} \lesssim md_{\pi}(V,W). \end{displaymath}
This completes the proof of \eqref{form5}, and shows that $\calH^{m(n - m)}_{d}(B) \asymp \calH^{m(n - m)}_{\pi}(B)$ for any ball $B \subset G(n,m)$ (in either metric). From \eqref{form4}, we may now infer that
\begin{equation}\label{form30} 0 < \liminf_{\delta \to 0} \frac{\calH^{m(n - m)}_{\pi}(B_{\pi}(W_{0},\delta))}{\delta^{m(n - m)}} \leq \limsup_{\delta \to 0} \frac{\calH^{m(n - m)}_{\pi}(B_{\pi}(W_{0},\delta))}{\delta^{m(n - m)}} < \infty. \end{equation}
Finally, we observe that $\calH^{m(n - m)}_{\pi}$ is a finite $O(n)$-invariant measure on $G(n,m)$. The finiteness part follows from the equivalence of $\calH^{m(n - m)}_{\pi}$ with $\calH^{m(n - m)}_{d}$, combined with the finiteness of the $\calH^{m(n - m)}$-measure of the manifold $G$; in fact, the exact $\calH^{m(n - m)}$-measure of $G$ is computed at the end of \cite[3.2.28]{Fe}. The $O(n)$-invariance was precisely the reason why we introduced the measure $\calH^{m(n - m)}_{\pi}$: the metric $d_{\pi}$ is $O(n)$-invariant, so all the corresponding Hausdorff measures are automatically $O(n)$-invariant. Now $\calH^{m(n - m)}_{\pi}$ and $\gamma_{n,m}$ are both $O(n)$-invariant -- hence uniformly distributed -- measures on $G(n,m)$, and it follows from \cite[Theorem 3.4]{Mat2} that $\gamma_{n,m} = \beta\calH^{m(n - m)}_{\pi}$ for some finite constant $\beta > 0$. We infer that \eqref{form30} gives \eqref{form1}.
\end{proof}


\begin{thebibliography}{SSS94}
\bibitem{Bo} \textsc{V. I. Bogachev}: \emph{Measure Theory}, Springer, 2006
\bibitem{FH} \textsc{K. J. Falconer and J. Howroyd}: \emph{Packing dimensions of projections and dimension profiles}, Math. Proc. Cambridge Philos. Soc. \textbf{121}, Issue 2 (1997), pp. 269--286
\bibitem{Fe} \textsc{H. Federer}: \emph{Geometric Measure Theory}, Springer, 1969
\bibitem{Jar} \textsc{M. J\"arvenp\"a\"a}: \emph{On the Upper Minkowski Dimension, the Packing Dimension, and Orthogonal Projections}, Ann. Acad. Sci. Fenn. Ser. A I Math. Dissertationes \textbf{99} (1994)
\bibitem{JJK} \textsc{E. J\"arvenp\"a\"a, M. J\"arvenp\"a\"a, T. Keleti}: \emph{Hausdorff dimension and non-degenerate families of projections}, arXiv: 1203.5296v1
\bibitem{JJLL} \textsc{E. J\"arvenp\"a\"a, M. J\"arvenp\"a\"a, F. Ledrappier and M. Leikas}: \emph{One-dimensional families of projections}, Nonlinearity \textbf{21} (2008), pp. 453--463
\bibitem{JP} \textsc{H. Joyce and D. Preiss}: \emph{On the existence of subsets of finite positive packing measure}, Mathematika \textbf{42} (1995), pp. 15--24
\bibitem{Ka} \textsc{R. Kaufman}: \emph{On Hausdorff dimension of projections}, Mathematika \textbf{15} (1968), pp. 153--155
\bibitem{Lu} \textsc{A. Lubin}: \emph{Extensions of measures and the von Neumann selection theorem}, Proc. Amer. Math. Soc. \textbf{43}(1) (1974), pp. 118--122
\bibitem{Mat1} \textsc{P. Mattila}: \emph{Orthogonal Projections, Riesz Capacities, and Minkowski Content}, Indiana Univ. Math. J. \textbf{39}, Issue 1 (1990), pp. 185--198
\bibitem{Mat2} \textsc{P. Mattila}: \emph{Geometry of sets and measures in Euclidean spaces}, Cambridge University Press, 1995
\bibitem{Or} \textsc{T. Orponen}: \emph{On the Packing Dimension and Category of Exceptional Sets of Orthogonal Projections}, arXiv:1204.2121
\bibitem{Wh} \textsc{H. Whitney}: \emph{Geometric Integration Theory}, Princeton University Press, 1957
\end{thebibliography}
\end{document}